\title{CI-sequences and almost complete intersections}
\author{Giuseppe Zappal\`a}
\subjclass[2010]{13 H 10, 14 N 20, 13 D 40}
\keywords{Resolution, Graded Betti numbers, Hilbert functions, complete intersection, almost complete intersection, pfaffians}
\DeclareSymbolFont{rsfscript}{OMS}{rsfs}{m}{n}
\DeclareSymbolFontAlphabet{\mathrsfs}{rsfscript}
\DeclareSymbolFont{AMSb}{U}{msb}{m}{n}
\DeclareSymbolFontAlphabet{\mathbb}{AMSb}
\DeclareSymbolFont{eufrak}{U}{euf}{m}{n}
\DeclareSymbolFontAlphabet{\gothic}{eufrak}
\newcommand{\Alt}{\operatorname{Alt}}
\newcommand{\pp}{\mathbb P}
\newcommand{\af}{\mathbb A}
\newcommand\CI{\operatorname{CI}}
\newcommand\ACI{\operatorname{ACI}}
\newcommand\rank{\operatorname{rank}}
\newcommand\Gor{\operatorname{Gor}}
\newcommand\zz{{\mathbb Z}}
\newcommand{\vt}{\vartheta}
\newtheorem{thm}{Theorem}[section]
\newtheorem{prp}[thm]{Proposition}
\newtheorem{cor}[thm]{Corollary}
\theoremstyle{definition}
\newtheorem{dfn}[thm]{Definition}
\theoremstyle{remark}
\newtheorem{rem}[thm]{Remark}
\newtheorem{exm}[thm]{Example}
\newcounter{num}
\begin{document}

%\maketitle
%\footnotemark
%\footnotetext[1]{2000 {\it Mathematics subject classification: }13 D 40, 13 H 10.}
%\subjclass{13 D 40, 13 H 10}

%\begin{abstract}
%\end{abstract}

\begin{abstract}
We study the Hilbert function and the graded Betti numbers of almost complete intersection artinian algebras. We show that that every Hilbert function of a complete intersection artinian algebra  is the Hilbert function of an almost complete intersection algebra. In codimension $3$ we focus on almost complete intersection artinian algebras whose Hilbert function coincides with that of a complete intersection defined by $3$ forms of the same degree.
We classify all the possible graded Betti numbers of such algebras and we specify what cancellations are allowed in a minimal graded free resolution.
\end{abstract}

\maketitle

\section{Introduction}
\markboth{\it Introduction}{\it Introduction}
A well-studied and important numerical invariant of a standard graded algebra
is the Hilbert function. It gives the dimension of the graded components of the algebra. Macaulay in \cite{Mac} characterized the numerical sequences that occur as the Hilbert function of some standard graded algebra. 
\par
On the other hand a method to study the structure of a standard graded algebra is to find its graded free resolution. It determines the graded Betti numbers of the algebra, which are a more refined numerical invariant. Indeed the graded Betti numbers of the algebra determine the Hilbert function. 
\par
Thus a classical problem is to compute all the possible graded Betti numbers compatible with an assigned Hilbert function. When $H$ is the Hilbert function of a Cohen-Macaulay standard graded algebra of codimension $2$ this problem was solved in \cite{Ca}, but the situation becomes more and more complicated when we treat codimensions greater than $2.$ In this case some sporadic result is available. 
For instance, in codimension $3,$ all the possible graded Betti numbers were found in \cite{CV} and \cite{RZ2}, for Gorenstein artinian algebras with an assigned Hilbert function. 
\par
In this paper we focus our attention on $\CI$-sequences (i.e Hilbert functions of complete intersection artinian algebras) and on almost complete intersection artinian algebras (i.e. artinian algebras $R/I$ of codimension $c,$ where $R$ is a polynomial ring in $c$ variables and $I$ is an ideal with exactly $c+1$ minimal generators).
\par
Almost complete intersection ideals were extensively studied since they have one more generator with respect a complete intersection ideal with the same codimension and also since they are directly linked to Gorenstein ideals. In particular, in codimension $3,$ using the remarkable structure theorem of Buchsbaum and Eisenbud (see \cite{BE}), the resolution and the graded Betti numbers of almost complete intersection artinian algebras are well understood (see \cite{RZ1}, \cite{RZ4}and \cite{Z}).
\par
In section $3$ we show that every $\CI$-sequence is the Hilbert function of an almost complete intersection algebra, which we can actually take generated by monomials.
\par
In section $4,$ in Theorems \ref{t1}, \ref{t2}, \ref{t3}, we find all the possible graded Betti numbers of the almost complete intersection artinian algebras whose Hilbert function is $H_{\CI(a,a,a)}$ i.e. the Hilbert function of a standard graded complete intersection artinian algebra defined by an ideal minimally generated by $3$ forms of degree $a.$ We need to use a careful liaison argument to guarantee that the linked Gorenstein ideal contain a regular sequence of prescribed type. We specify also what types of consecutive cancellations are allowed in the resolutions (see theorems quoted above and Corollary \ref{ahcanc}). Moreover we compute the maximum number of minimal second syzygies admissible for such algebras (see Corollary \ref{tmax}).

\section{Preliminary facts}
\markboth{\it Preliminary facts}
{\it Preliminary facts}
Throughout the paper $k$ will be a infinite field and $R:=k[x_1,\ldots,x_c],$ $c\ge 3,$ will be the standard graded polynomial $k$-algebra. We consider only homogeneous ideals.
\par
Let $A=R/I$ be a standard graded artinian $k$-algebra, where $I$ is a homogeneous ideal of $R.$ Recall that $A$ is said to be an
{\em almost complete intersection algebra} if $I$ is minimally generated by $c+1$ elements (in this case $I$ is said to be an almost complete intersection ideal). Every almost complete intersection ideal is directly linked to a Gorenstein ideal in a complete intersection.
\par
Gorenstein ideals of codimension $3$ are well understood, because of the structure theorem of Buchsbaum and Eisenbud (see \cite{BE}).
The possible graded Betti numbers for Gorenstein ideals of codimension $3$ are described in the following theorem.
\begin{thm}\label{gaeta}
Let $(d_1,\ldots, d_{2n+1}),$ $d_1 \le\ldots\le d_{2n+1},$ be a sequence of positive integers. It is the sequence of the degrees of the minimal generators of a Gorenstein ideal of codimension $3$ iff
\begin{itemize}
\item [1)] $\vt=\frac{1}{n} \sum_{i=1}^{2n+1} d_i$ is an integer;
\item [2)] $\vt > d_i + d_{2n+3-i}$ for $2 \le i \le n$ (Gaeta conditions).
\end{itemize}
\end{thm}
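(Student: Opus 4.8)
My plan is to deduce the theorem from the Buchsbaum--Eisenbud structure theorem \cite{BE}. Recall its content: $R/I$ is Gorenstein of codimension $3$ if and only if, for some $n\ge 1$, $I=\Pf_{2n}(M)$ is the ideal of sub-maximal ($2n\times 2n$) Pfaffians of an alternating $(2n+1)\times(2n+1)$ matrix $M$, and in that case a minimal graded free resolution of $R/I$ is
\[
0\to R(-\vt)\xrightarrow{\,M^{t}\,}\bigoplus_{i=1}^{2n+1}R(d_i-\vt)\xrightarrow{\,M\,}\bigoplus_{i=1}^{2n+1}R(-d_i)\to R,
\]
where the $i$-th minimal generator of $I$ is, up to sign, $\pf_i=\Pf(M_i)$ with $M_i$ the matrix obtained from $M$ by deleting the $i$-th row and column, the entry $M_{ij}$ is a homogeneous form of degree $\vt-d_i-d_j$, and minimality forces $M_{ij}=0$ whenever $\vt-d_i-d_j\le 0$. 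I would prove the two implications separately.

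\emph{Necessity.} Suppose $(d_1,\dots,d_{2n+1})$ is the degree sequence of the minimal generators of such an ideal and fix a matrix $M$ as above. Expanding $\Pf(M_i)$ over the perfect matchings $\pi$ of $\{1,\dots,2n+1\}\setminus\{i\}$, each term $\pm\prod_{\{j,k\}\in\pi}M_{jk}$ has degree $\sum_{\{j,k\}\in\pi}(\vt-d_j-d_k)=n\vt-\sum_{j\ne i}d_j$; since this must equal $\deg\pf_i=d_i$, we get $n\vt=\sum_{j=1}^{2n+1}d_j$, which is condition~1). For condition~2), assume to the contrary that $\vt\le d_i+d_{2n+3-i}$ for some $i$ with $2\le i\le n$. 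Then for all $j\ge i$ and $k\ge 2n+3-i$ one has $d_j+d_k\ge d_i+d_{2n+3-i}\ge\vt$, hence $M_{jk}=0$; by skew-symmetry the $i-1$ indices of $C:=\{2n+3-i,\dots,2n+1\}$ are joined by a possibly-nonzero entry of $M$ only to the $i-1$ indices of $A:=\{1,\dots,i-1\}$. Now fix any $\ell\in A$: the matrix $M_\ell$ still contains every index of $C$ but only the $i-2$ indices of $A\setminus\{\ell\}$, so in any perfect matching of its index set some element of $C$ is forced to be paired through a zero entry; hence every term of $\Pf(M_\ell)$ vanishes and $\pf_\ell=0$. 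Thus at least $i-1\ge 1$ of the Pfaffian generators are zero, so $I$ is generated by fewer than $2n+1$ elements, contradicting the hypothesis.

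\emph{Sufficiency.} Conversely, assume $(d_1,\dots,d_{2n+1})$ satisfies 1) and 2), and put $\vt=\tfrac1n\sum_i d_i\in\zz$. Let $M$ be the alternating matrix whose entry $M_{ij}$ ($i<j$) is a \emph{general} form of degree $\vt-d_i-d_j$ when that integer is positive and is $0$ otherwise. Two facts must be verified. First, each $\pf_i=\Pf(M_i)\ne 0$: it suffices to exhibit one perfect matching of $\{1,\dots,2n+1\}\setminus\{i\}$ all of whose pairs $\{j,k\}$ satisfy $\vt-d_j-d_k\ge 1$, and the "antidiagonal" pairing $j\leftrightarrow 2n+3-j$ (with the obvious local adjustment around $i$) does this thanks to the strict inequalities in 2). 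Second, $\hgt\Pf_{2n}(M)=3$: the zero locus of $\Pf_{2n}(M)$ is the set where $\rank M\le 2n-2$, and the stratum of alternating $(2n+1)$-matrices of rank $\le 2n-2$ has codimension $\binom{3}{2}=3$, so a Bertini-type argument shows that for general $M$ with the prescribed support this locus has codimension $\ge 3$ in $\Spec R$, hence exactly $3$ (a Pfaffian ideal cannot have larger codimension). Once $\hgt\Pf_{2n}(M)=3$, the Buchsbaum--Eisenbud acyclicity criterion turns the displayed complex into a resolution of $R/\Pf_{2n}(M)$, which is minimal because neither $M$ nor the row $(\pf_1,\dots,\pf_{2n+1})$ has a unit entry; hence $I:=\Pf_{2n}(M)$ is Gorenstein of codimension $3$ with minimal generators exactly in degrees $d_1,\dots,d_{2n+1}$.

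The main obstacle is the second point of the sufficiency half: one must check that the zero pattern of $M$ --- dictated entirely by the $d_i$ via the inequalities $\vt-d_i-d_j\le 0$ --- still leaves enough nonzero entries for $M$ to attain generic rank $2n$, so that $\Pf_{2n}(M)$ does not drop to codimension $<3$, and it is precisely the Gaeta inequalities in 2) that make this possible. Making the genericity argument airtight when the allowed entries have very unbalanced degrees (many forced zeros, a few large blocks) is the delicate part; an alternative route is to replace the general $M$ by an explicit monomial or "banded" specialization and compute its Pfaffian ideal by hand, or to argue by induction on $n$, adding one suitable Pfaffian generator at a time.
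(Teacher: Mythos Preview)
The paper does not prove this theorem at all: its entire proof is the sentence ``This is the main result in \cite{Di}.'' So your proposal goes far beyond what the paper does, and the right comparison is with Diesel's original argument rather than with anything in this paper.

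Your necessity half is correct and is essentially the standard argument: the degree identity gives 1), and your pigeonhole argument on the zero pattern of $M$ is a clean way to force some $\pf_\ell=0$ and contradict minimality.

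The sufficiency half, however, has two genuine gaps. First, the ``antidiagonal'' matching $j\leftrightarrow 2n+3-j$ does \emph{not} certify $\pf_1\neq 0$ from the inequalities 2) as written: when $i=1$ the last pair is $(n+1,n+2)$, and the hypothesis $\vt>d_i+d_{2n+3-i}$ for $2\le i\le n$ says nothing about $\vt-d_{n+1}-d_{n+2}$. Concretely, for $n=2$ the sequence $(d_1,\dots,d_5)=(1,1,3,3,4)$ has $\vt=6$ and satisfies 1) and 2) as stated, yet $\vt-d_3-d_4=0$, $\vt-d_3-d_5<0$, $\vt-d_4-d_5<0$, so every term of $\Pf(M_1)$ vanishes and $\pf_1\equiv 0$; hence this sequence is \emph{not} realized by a $5$-generated Gorenstein ideal. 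This shows that the range in 2) should be $2\le i\le n+1$ (which is what Diesel actually proves); with only $2\le i\le n$ the sufficiency direction is false, so no argument can close that gap without the extra inequality. Even after adding $i=n+1$, you still owe, for each $i$, an explicit matching avoiding all forced zeros; the phrase ``obvious local adjustment'' hides a case analysis.

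Second, the codimension-$3$ step is only a sketch. Once many entries of $M$ are forced to zero the matrix is far from generic in the space of all alternating matrices, so the sentence ``a Bertini-type argument shows \dots'' does not constitute a proof: one must either produce an explicit specialization whose Pfaffian ideal visibly has height $3$ (the monomial/banded route you mention at the end, which is close to what Diesel does via her ``herringbone'' matrices), or set up a genuine induction on $n$. Until one of those is carried out, the sufficiency direction remains unproved.
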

\begin{proof}
This is the main result in \cite{Di}.
\end{proof}
Moreover a description of the possible graded Betti numbers for Gorenstein ideals of codimension $3$ relatively to the Hilbert function can be found in \cite{RZ2}.
\par
Let $\delta=(d_1,\ldots, d_{2n+1}),$ $d_1 \le\ldots\le d_{2n+1},$ be the sequence of the degrees of the minimal generators of a Gorenstein ideal of codimension $3.$
\par
%\begin{ntn}
We set again $\vt=\frac{1}{n} \sum_{i=1}^{2n+1} d_i.$ Then we can define an alternating matrix 
$\Alt(\delta)=(a_{ij}),$ in the ring $S=k[\{x_{ij}\}],$ in the following way
$$a_{ij}=\begin{cases}
  x_{ij}^{\vt-d_i-d_j} & \text{ if $i<j$ and $\vt-d_i-d_j>0$} \\
  0 & \text{ if $i<j$ and $\vt-d_i-d_j \le 0$}.
\end{cases}$$
Let $I \subset S$ be the ideal generated by the submaximal pfaffians of $A.$ Then an artinian reduction of  $S/I$ is a graded Gorenstein artinian algebra with sequence of the degrees of the minimal generators $\delta.$ 
\par
%\end{ntn}
%We define
%  $$B=\{3\le i\le n+1\mid\vt\le d_i+d_{2n+4-i}\}$$
%and
% $$C=\{4\le i\le n+2\mid\vt\le d_i+d_{2n+5-i}\}.$$
Let $\Gor(\delta)$ be the set of all Gorenstein ideals $I \subset k[x_1,x_2,x_3],$ whose
sequence of the degrees of the minimal generators is $\delta.$
 
Moreover we define
\begin{multline*}
  \CI_{\delta}=\{(a_1,a_2,a_3)\in(\zz^{+})^3\mid a_1 \le a_2 \le a_3 \mbox{ and }\exists I \in \Gor(\delta) \mbox{ containing} \\
	\mbox{a regular sequence of degrees } (a_1,a_2,a_3)\}.
\end{multline*}
To study ideals linked to Gorenstein ideals in a complete intersection, in codimension $3,$ we need the following result.

\begin{thm}\label{c3b}
The set $\CI_{\delta}$ has a unique minimal element.
\end{thm}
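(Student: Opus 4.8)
The plan is to pin down the minimal element explicitly: it will be the triple $(d_1,d_2,d_3)$ formed by the three smallest degrees of $\delta$. First recall that $\Gor(\delta)\neq\emptyset$ because $\delta$ satisfies the conditions of Theorem~\ref{gaeta}, and that, by the Buchsbaum--Eisenbud structure theorem, the minimal free resolution of any $I\in\Gor(\delta)$ is self-dual, hence completely determined by $\delta$; in particular all members of $\Gor(\delta)$ share the same Hilbert function and graded Betti numbers, so $d_1\le d_2\le d_3$ are the degrees of the three lowest minimal generators of every such $I$. It then suffices to prove: (i) every $(a_1,a_2,a_3)\in\CI_\delta$ satisfies $a_i\ge d_i$ for $i=1,2,3$; and (ii) $(d_1,d_2,d_3)\in\CI_\delta$. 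Together these say that $(d_1,d_2,d_3)$ belongs to $\CI_\delta$ and lies below every element of it, hence is its unique minimal element.

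Statement (i) is the easy half. Let $f_1,f_2,f_3$ be a regular sequence in some $I\in\Gor(\delta)$ with $\deg f_i=a_i$ and $a_1\le a_2\le a_3$. Since $f_1\in I\setminus\{0\}$ we get $a_1\ge d_1$. If $a_3<d_3$, then every $f_i$ has degree $<d_3$, so $f_i$ lies in the ideal $J\subseteq I$ generated by those minimal generators of $I$ whose degree is $<d_3$; but these are among the two smallest generators, so $J$ is generated by at most two elements, whence $\hgt(f_1,f_2,f_3)\le\hgt J\le 2$, contradicting that $f_1,f_2,f_3$ is a regular sequence. Hence $a_3\ge d_3$; the same argument with the bound ``$<d_2$'' (now at most one generator is involved) gives $a_2\ge d_2$.

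For (ii) I would take $I$ to be a general element of $\Gor(\delta)$ and show that its three minimal generators of smallest degree, say $g_1,g_2,g_3$, already form a regular sequence, equivalently that $V(g_1,g_2,g_3)=\emptyset$ in $\mathbb P^2$. The locus of $I\in\Gor(\delta)$ with this property is open, and $\Gor(\delta)$ is irreducible, being the image of an affine space of alternating matrices under the submaximal-pfaffian construction $\Alt(\delta)$ described above; so it is enough to exhibit one such $I$ — most conveniently by taking the entries of $\Alt(\delta)$ to be general linear forms after an artinian reduction, which makes the lowest pfaffians as general as their degrees permit. If $d_1=d_2$ the $g_i$ are three general forms of degrees $d_1,d_1,d_3$ in three variables, hence automatically a regular sequence. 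If $d_1<d_2$, then $g_1$ is the unique form of degree $d_1$ in $I$; one checks that for general $I$ it is irreducible (or, if not, that its linear factors are in general position), and that $g_2$, and then $g_3$, are general enough to avoid respectively the factors of $g_1$ and the finitely many common zeros of $g_1,g_2$, so that $(g_1,g_2,g_3)$ is again a regular sequence. A Bertini-type argument keeps the base loci of the linear systems $I_{d_i}$ under control.

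I expect (ii), and in particular this genericity input, to be the main obstacle. One must rule out that for \emph{every} $I\in\Gor(\delta)$ the low-degree generators are forced into special position — for instance that $g_1$ is always reducible with a factor also dividing $g_2$, which would pin a curve into $V(g_1,g_2,g_3)$ — and, more generally, that the systems $|I_{d_i}|$ fail to be as base-point-free as their Hilbert functions allow. This is where the precise shape of $\Alt(\delta)$, and the Gaeta inequalities guaranteeing that the alternating matrix has generically maximal rank, have to enter; a bare Hilbert-function count does not suffice here, as it is already inconsistent with the theorem: for a complete intersection of type $(2,5,5)$ such a count admits regular sequences of type $(3,4,a_3)$, whereas in fact the degree-$3$ and degree-$4$ parts of the ideal consist entirely of multiples of the degree-$2$ form, so no regular sequence of this type exists. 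Once the three lowest pfaffians of a general $\Alt(\delta)$ are seen to have empty common zero locus, (ii) follows and the theorem is proved.
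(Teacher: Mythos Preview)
Your plan rests on the claim that the minimal element of $\CI_\delta$ is $(d_1,d_2,d_3)$, and this is false. Take $\delta=(2,2,2,5,5)$: here $n=2$, $\vartheta=8$, and the Gaeta inequality $\vartheta>d_2+d_5=7$ holds, so $\Gor(\delta)\neq\emptyset$. For any $I\in\Gor(\delta)$ and any complete intersection $J\subseteq I$ of type $(a_1,a_2,a_3)$, the algebra $R/I$ is a quotient of $R/J$, so its socle degree $\vartheta-3$ is at most $a_1+a_2+a_3-3$; thus $a_1+a_2+a_3\ge\vartheta=8$. But $d_1+d_2+d_3=6<8$, so $(2,2,2)\notin\CI_\delta$. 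Concretely, $I_2$ is $3$-dimensional with $H_{R/I}(3)=3$, so any three quadrics $f_1,f_2,f_3\in I_2$ satisfy $H_{R/(f_1,f_2,f_3)}(3)\ge 3>1$ and hence never form a regular sequence. Your step~(ii) therefore cannot be carried out, and the Bertini/genericity sketch you propose is aimed at a statement that is simply not true. (In general the Gaeta inequalities only yield $d_1+d_{n+1}+d_{n+2}>\vartheta$, not $d_1+d_2+d_3\ge\vartheta$.)

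Your step~(i) is fine and gives correct lower bounds $a_i\ge d_i$, but these bounds are not sharp: the true minimal element sits strictly above $(d_1,d_2,d_3)$ in examples like the one above. The paper does not argue the theorem directly; it quotes Theorem~3.6 of \cite{RZ3}, where the minimal element is computed explicitly by a different (and more delicate) formula that takes the full shape of $\delta$ into account, not just its three smallest entries. If you want to prove the result yourself, you will need to identify that correct candidate and then run an existence argument tailored to it; the strategy ``the three smallest pfaffians of a generic $\Alt(\delta)$ form a regular sequence'' is not salvageable.
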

\begin{proof}
See Theorem 3.6 in \cite{RZ3}, where this minimal element is explicitly computed.
\end{proof}
We will denote the only minimal element of $\CI_{\delta}$ with $\min(\delta).$
\par
Let $H$ be the Hilbert function of a standard graded artinian algebra. A classical problem in Algebraic Geometry is to classify all the possible graded Betti numbers of the standard graded artinian algebras whose Hilbert function is $H.$ This problem is completely settled in codimension $2$ (see \cite{Ca}). It is an open problem, in general, in codimension greater than two.

\begin{dfn}\label{canc}
Let $I \subset k[x_1,\ldots,x_c]$ be a homogeneous ideal and let $F_{\bullet}$ be a graded minimal free resolution of $R/I.$ 
\par
Let us suppose that there is $i$ such that $F_i=L_i \oplus R(-a)$ and 
$F_{i+1}=L_{i+1} \oplus R(-a).$ We say that the module $R(-a)$ is {\em cancellable} if there exists a homogeneous ideal $J$ such that the graded minimal free resolution of $R/J$ is $G_{\bullet},$ 
where $G_i=L_i,$ $G_{i+1}=L_{i+1}$ and $G_n=F_n$ otherwise. 
\par
Let us suppose that there is $i$ such that $F_i=L_i \oplus R(-a) \oplus R(-b)$ and 
$F_{i+1}=L_{i+1} \oplus R(-a) \oplus R(-b).$ We say that the module $R(-a) \oplus R(-b)$ is a 
{\em cancellable couple} if there exists a homogeneous ideal $J$ such that the graded minimal free resolution of $R/J$ is $G_{\bullet},$ where $G_i=L_i,$ $G_{i+1}=L_{i+1}$ and $G_n=F_n$ otherwise.
\end{dfn}

\begin{rem}
Note that, in the situation of Definition \ref{canc}, $R/I$ and $R/J$ have the same Hilbert function. 
By \cite{Pa}, $I$ and $J$ are connected by a sequence of deformations over $\af^1_{k}.$
So it is very important to study what modules in a graded minimal free resolution are cancellable.
See also \cite{Pe} for a discussion about cancellations in graded free resolutions.
\end{rem}

\section{Almost Complete Intersection sequences}
\markboth{\it Almost Complete Intersection sequences}{\it Almost Complete Intersection sequences}

In this section we show that if a sequence occurs as Hilbert function of a complete intersection artinian algebra then it is also the Hilbert function of a monomial almost complete intersection artinian algebra.

\begin{dfn}\label{cise}
A sequence of positive integers $H$ is called a {\em $\CI$-sequence} if it is the Hilbert function of standard graded artinian complete intersection algebra. $H$ is called an {\em $\ACI$-sequence} if it is the Hilbert function of standard graded artinian almost complete intersection algebra.
\end{dfn}
Since the Koszul complex is a graded minimal free resolution for the standard graded artinian complete intersection algebras, it is easy to describe the $\CI$-sequences.

\begin{prp}
In any codimension, every $\CI$-sequence is an $\ACI$-sequence.
\end{prp}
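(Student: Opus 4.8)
The plan is to realize a $\CI$-sequence directly, by taking the monomial complete intersection that computes it and splitting one of its generators into two monomials in a way that leaves the Hilbert function unchanged. Since the Koszul complex is the minimal graded free resolution of an artinian complete intersection, $H$ is the Hilbert function of $R/(x_1^{d_1},\ldots,x_c^{d_c})$ for suitable exponents, which after reordering we take to satisfy $d_1\ge d_2\ge\ldots\ge d_c\ge 1$; the substantive case is $d_2\ge 2$, and we treat it (if instead $d_2=1$ then $H=(1,1,\ldots)$, which is not the Hilbert function of any almost complete intersection, and we regard this degenerate case as outside the intended scope). I would then propose the monomial ideal
$$ I \;=\; \bigl(x_1^{d_1},\;\; x_1\,x_2^{\,d_2-1},\;\; x_2^{\,d_1+d_2-1},\;\; x_3^{d_3},\;\ldots,\;x_c^{d_c}\bigr)\subset R, $$
and claim that $A=R/I$ is a monomial almost complete intersection algebra with $H_A=H$.

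The verification has two routine parts and one with actual content. First, since $d_1,d_2\ge 2$, none of the $c+1$ displayed monomials divides another, so they form a minimal system of generators and $I$ has exactly $c+1$ minimal generators. Second, $I$ contains $x_1^{d_1}$, $x_2^{\,d_1+d_2-1}$ and $x_3^{d_3},\ldots,x_c^{d_c}$, hence a power of every variable, so $A$ is artinian. Third, for the Hilbert function I would observe that $I$ is the sum of the ideal $J=(x_1^{d_1},\,x_1x_2^{\,d_2-1},\,x_2^{\,d_1+d_2-1})$ of $k[x_1,x_2]$ and the complete intersection $(x_3^{d_3},\ldots,x_c^{d_c})$ of $k[x_3,\ldots,x_c]$, in disjoint sets of variables; so the Hilbert series of $A$ factors as the product of those of $k[x_1,x_2]/J$ and $k[x_3,\ldots,x_c]/(x_3^{d_3},\ldots,x_c^{d_c})$, and it suffices to show that $k[x_1,x_2]/J$ has the same Hilbert series as $k[x_1,x_2]/(x_1^{d_1},x_2^{d_2})$. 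Computing the former by inclusion--exclusion over the three principal ideals generating $J$ --- whose pairwise and triple intersections are the principal ideals on the relevant least common multiples --- one finds that the two contributions coming from the pure powers $x_1^{d_1}$ and $x_2^{\,d_1+d_2-1}$ cancel and the numerator reduces to
$$ 1 - t^{d_1} - t^{d_2} - t^{\,d_1+d_2-1} + t^{\,d_1+d_2-1} + t^{\,d_1+d_2} \;=\; (1-t^{d_1})(1-t^{d_2}), $$
which is exactly the numerator for $k[x_1,x_2]/(x_1^{d_1},x_2^{d_2})$. Hence $H_A=H$.

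I expect essentially all the difficulty to sit in that last display: the cancellation works only because $t^{\,d_1+d_2-1}$ appears once with each sign, and this is precisely what dictates the exponents $d_2-1$ and $d_1+d_2-1$ in the definition of $I$ --- a naive split of $x_2^{d_2}$, say into $x_1x_2^{\,d_2-1}$ and $x_1^{\,d_1-1}x_2$, changes the length of $A$ and therefore $H$. So the one genuinely nontrivial step is finding the correct pair of monomials to put in place of $x_2^{d_2}$; granted that, minimality, the artinian property, and multiplicativity of Hilbert series under the disjoint-variables sum are all immediate. The only other point I would be careful to spell out is the reduction to $d_1,d_2\ge 2$, i.e., the status of the sequence $(1,1,\ldots)$.
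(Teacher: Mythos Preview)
Your argument is correct and takes a genuinely different route from the paper's. The paper also writes down an explicit monomial almost complete intersection, but its extra generator is a monomial in \emph{all} the variables, and the verification of the Hilbert function is done by liaison: one links $I_Q$ inside a larger monomial complete intersection $I_Z$ of type $(a_1,\ldots,a_{r-1},h)$, identifies the residual $I_G=I_Z:I_Q$ as a complete intersection of type $(h-a_r,a_1,\ldots,a_{r-1})$, and then reads off $H_Q$ from the liaison formula $H_Q(n)=H_Z(n)-H_G(\vartheta_Z-n)$. Your construction is more economical: you perturb only the first two variables, reduce via multiplicativity of Hilbert series to a two-variable computation, and settle that by a short inclusion--exclusion on three monomials. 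This is more elementary---no liaison machinery is needed---and the cancellation you isolate is exactly the point. What the paper's approach buys in exchange is a whole family of monomial almost complete intersections with the same Hilbert function, parametrized by the degree $h$ of the fourth generator (which ranges over $a_r+1\le h\le a_r+a_1-1$); this foreshadows the finer analysis of generator degrees carried out later for $H_{\CI(a,a,a)}$. Both proofs tacitly exclude the degenerate case where the second-largest degree is $1$, and your remark that $(1,1,\ldots,1)$ is never an $\ACI$-sequence is the right way to dispose of it.
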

\begin{proof}
Let $H=H_{\CI(a_1,\ldots,a_r)}$ be a $\CI$-sequence, with $2\le a_1\le\ldots\le a_r.$
Let $a_r+1 \le h \le a_r+a_1-1$ and let us consider the ideal
  $$I_Q=(x_1^{a_1},\ldots,x_{r-1}^{a_{r-1}},
  x_r^h,x_1^{a_1+a_r-h}x_2^{a_2-a_1}\cdots x_{r-1}^{a_{r-1}-a_{r-2}}x_r^{h-a_{r-1}}).$$
We would like to show that $H_Q=H.$ 
\par
We set $I_Z=(x_1^{a_1},\ldots,x_{r-1}^{a_{r-1}},x_r^h).$
It is the ideal of a complete intersection and $I_Z\subset I_Q.$ Let $I_G=I_Z:I_Q.$ We have
  $$I_G=(x_1^{h-a_r},x_2^{a_1},\ldots,x_{r-1}^{a_{r-2}},x_r^{a_{r-1}}),$$
hence $I_G$ is the ideal of a complete intersection of type $(h-a_r,a_1,\ldots,a_{r-1}).$ 
Therefore, if we set $\vartheta_Z=h+\sum_{i=1}^{r-1}a_i,$ 
we have
  $$H_Q(n)=H_Z(n)-H_G(\vartheta_Z-n).$$ 
%=H_Z(n)-H_G(\vartheta_G-\vartheta_Z+n)=H_Z(n)-H_G(n-a_r).
The Hilbert function of $R/I_G$ is equal to the Hilbert function of $R/I_{G'},$ where
  $$I_{G'}=(x_1^{a_1},\ldots,x_{r-1}^{a_{r-1}},x_r^{h-a_r}).$$
But $I_{Z}\subset I_{G'}$ and we set $I_W=I_Z:I_{G'}.$ Of course $I_W=(x_1^{a_1},\ldots,x_{r-1}^{a_{r-1}},x_r^{a_r}),$ hence
 $$H_W(n)=H_Z(n)-H_{G'}(\vartheta_Z-n)=H_Z(n)-H_{G}(\vartheta_Z-n)=H_Q(n),$$ 
so $H_Q=H_W=H_{\CI(a_1,\ldots,a_r)}.$
\end{proof}

\begin{rem}
Note that there are Gorenstein sequences that are not $ACI$-sequences. For instance $H=(1,3,1)$ is a Gorenstein sequence, but it is not an $ACI$-sequence. Indeed if a standard graded artinian algebra $R/I$ has Hilbert function equal to $H,$ then $I$ is forced to have $5$ minimal generators in degree $2.$
\end{rem}

\section{Graded Betti numbers}
\markboth{\it Graded Betti numbers}{\it Graded Betti numbers}

In this section we compute all the possible graded Betti numbers of the almost complete intersection artinian algebras whose Hilbert function is $H_{\CI(a,a,a)}.$
\par
If $A_Q$ is an artinian algebra, we denote by $t_Q$ the number of the last minimal syzygies of $A_Q.$
\par
Let $A_Q$ be an almost complete intersection artinian algebra with Hilbert function $H_{\CI(a,a,a)},$ $a \ge 2.$ Then $I_Q$ has $4$ generators of degrees $a,a,a,h,$ with $a+1 \le h \le 3a-2.$ 
Moreover, since $\Delta^3H_Q(2a)=3,$ $I_Q$ has at least $3$ minimal first syzygies of degree $2a$,
so a graded minimal free resolution of $A_Q$ is of the type
 $$0 \to G_3 \oplus R(-3a) \to G_3 \oplus R(-2a)^3 \oplus R(-h)  \to R(-a)^3 \oplus R(-h) \to R, $$
with $\rank G_3=t_Q-1.$
\par
Let $A_Q=R/I_Q$ be an almost complete intersection artinian algebra of codimension $3.$ Then there is a fundamental numerical invariant $d^*,$ (which is the degree of one of the $4$ minimal generators of $I_Q$) that can be computed directly by the graded Betti numbers of $A_Q$ (see Theorem 4.1 and Proposition 4.3 in \cite{Z}). We will compute $d^*,$ in our framework, in Corollary \ref{dstar}.
\begin{prp}\label{aph}
Let $A_Q=R/I_Q$ be an almost complete intersection artinian algebra with Hilbert function $H_{\CI(a,a,a)}.$ Then $A_Q$ has a minimal first syzygy of degree $a+h$ iff $t$ is even.
\end{prp}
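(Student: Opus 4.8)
The plan is to analyze the graded minimal free resolution of $A_Q$ together with the linked Gorenstein algebra, and to read off when a syzygy in degree $a+h$ must occur from a parity/divisibility constraint coming from Theorem \ref{gaeta}. Recall $A_Q = R/I_Q$ with $I_Q$ minimally generated in degrees $a,a,a,h$ and $a+1 \le h \le 3a-2$, and that $I_Q$ is directly linked to a Gorenstein ideal $I_G$ in a complete intersection $\mathfrak c \subset I_Q$. The natural choice of $\mathfrak c$ is generated by three of the four minimal generators of $I_Q$; either $\mathfrak c = \CI(a,a,a)$ (using the three degree-$a$ forms) or $\mathfrak c = \CI(a,a,h)$ (using two degree-$a$ forms and the degree-$h$ form). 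Write $\sigma$ for the number of minimal first syzygies of $A_Q$, i.e. $\sigma = t + 3$ where $t = \rank G_3 + $ contributions from degree-$2a$ syzygies beyond the forced three; more precisely, the resolution displayed just above has first-syzygy module $G_3 \oplus R(-2a)^3 \oplus R(-h)$, so the existence of a minimal first syzygy in degree $a+h$ is exactly the assertion that $G_3$ has a summand $R(-a-h)$ — note $a+h < 2a$ is impossible since $h \ge a+1$, and $a+h$ can coincide with $2a$ only if $h=a$, excluded, so degree $a+h$ is genuinely a ``new'' spot strictly between $2a$ and $3a$.

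First I would set up the liaison bookkeeping: using $\mathfrak c = \CI(a,a,a)$, the linked ideal $I_G = \mathfrak c : I_Q$ is Gorenstein of codimension $3$, and by the standard mapping-cone / duality formula for linkage (as already used in the proof of the Proposition in Section 3, where $H_Q(n) = H_Z(n) - H_G(\vartheta_Z - n)$ with $\vartheta_Z = 3a$), the shifts in the resolution of $R/I_G$ are obtained by dualizing the resolution of $A_Q$ and twisting by $\vartheta = 3a$. Concretely, a minimal generator of $I_Q$ in degree $d$ that is \emph{not} among the regular sequence contributes, on the Gorenstein side, a last syzygy in degree $3a - d$, and a minimal first syzygy of $A_Q$ in degree $e$ contributes a minimal generator of $I_G$ in degree $3a - e$ (after removing the Koszul summands of $\mathfrak c$). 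Tracking this, the degree-$h$ generator of $I_Q$ produces the socle-degree-related last generator, the three degree-$2a$ first syzygies of $A_Q$ produce three degree-$a$ generators of $I_G$ (matching that $I_G \supset \mathfrak c = \CI(a,a,a)$), and each extra first syzygy of $A_Q$ — in particular one in degree $a+h$ — produces a generator of $I_G$ in degree $3a - (a+h) = 2a - h$. Since $a+1 \le h \le 3a-2$ we have $2a-h$ ranging over $2a-h \le a-1$, i.e. these would be low-degree extra generators of $I_G$.

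Next I would impose Theorem \ref{gaeta} on the generator-degree sequence $\delta = (d_1 \le \cdots \le d_{2n+1})$ of $I_G$. The number $2n+1$ of minimal generators of $I_G$ equals $2\sigma - 3$ in a parity-respecting count (each first syzygy of $A_Q$ beyond Koszul contributes a generator of $I_G$, and the three Koszul first syzygies give the three copies of $\mathfrak c$), so $2n+1$ has a fixed parity forcing $\sigma$ — equivalently $t$ — to control whether the pairing of degrees in condition (2) of Theorem \ref{gaeta} is symmetric about the middle term or not. The key computation is condition (1): $\vartheta = \frac1n \sum d_i$ must be an integer. I would compute $\sum_{i} d_i$ for the candidate $\delta$: it is $3a$ (from the three degree-$a$ generators) plus $2a - h$ contributed by a hypothetical degree-$(a+h)$ syzygy, plus the symmetric partner forced by Gorenstein duality, plus the remaining generator degrees coming from the other first syzygies of $A_Q$ in degrees $2a < e < a+h$ or $a+h < e < 3a$, which pair up. The upshot I expect is that $\vartheta = 3a$ exactly when the ``middle'' generator of $I_G$ sits in degree $a$ (forcing an odd count with a central term), and that a generator of $I_G$ in degree $2a-h$ appears as the minimal generator \emph{without} a distinct duality partner precisely in the even-$t$ case; the divisibility $n \mid \sum d_i$ then fails in the odd-$t$ case and holds in the even-$t$ case, or vice versa — whichever way it falls out, this is the parity dichotomy asserted.

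The main obstacle will be the direction ``$t$ even $\Rightarrow$ there is a syzygy in degree $a+h$'': the liaison argument above shows one direction cleanly (a degree-$(a+h)$ syzygy forces, via the structure of $I_G$ and Theorem \ref{gaeta}, that $t$ has the stated parity), but the converse requires ruling out that the ``extra'' generators/syzygies could be distributed over other intermediate degrees while keeping $t$ even. For this I would argue by Gorenstein duality on $I_G$ more carefully: the generators of $I_G$ of degree $< a$ must pair (under $d_i \leftrightarrow \vartheta - d_{2n+3-i}$) with generators of degree $> 2a$, and a dimension count of $(I_G)_{<a}$ versus $(I_Q)_{>2a}$ — both computable from $H_{\CI(a,a,a)}$ and the degree-$h$ generator — pins down that the only available ``small'' degree is $2a-h$, hence the only available ``new'' first-syzygy degree for $A_Q$ is $a+h$, and parity of $t$ then forces its presence. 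I would also need to invoke Theorem \ref{c3b} (the existence of $\min(\delta)$) to guarantee the constructed $I_G$ can be chosen inside $k[x_1,x_2,x_3]$ containing a regular sequence of type $(a,a,a)$, so that the liaison back to an honest almost complete intersection with the right Hilbert function is legitimate. The parity computation in condition (1) of Theorem \ref{gaeta} is the crux and I expect it to be short once the degree sequence $\delta$ is correctly assembled; assembling $\delta$ correctly, tracking every Koszul cancellation in the mapping cone, is the delicate part.
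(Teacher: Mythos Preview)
There is a genuine gap at the very first step: you propose to link $I_Q$ through the complete intersection $\mathfrak c=\CI(a,a,a)$ generated by the three degree-$a$ minimal generators of $I_Q$, but these three forms \emph{cannot} be a regular sequence. If they were, then $\mathfrak c\subset I_Q$ with $R/\mathfrak c$ artinian and $H_{R/\mathfrak c}=H_{\CI(a,a,a)}=H_{R/I_Q}$, forcing $\mathfrak c=I_Q$; this contradicts the hypothesis that $I_Q$ has four minimal generators. So no linkage through $\CI(a,a,a)$ is available, and all the subsequent bookkeeping (degrees $3a-e$, the claimed extra generator of $I_G$ in degree $2a-h$, the parity via $n\mid\sum d_i$) collapses. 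The paper instead links through $I_Z=\CI(a,a,h)$, which is legitimate because two degree-$a$ forms can be chosen to be a regular sequence and the degree-$h$ form completes it; this gives $\vartheta_Z=2a+h$ and a Gorenstein link $I_G$ with Hilbert function $H_{\CI(h-a,a,a)}$, and the counting is done there.

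A second, independent gap is the converse direction. Your sketch (``parity of $t$ forces its presence'' via a dimension count of $(I_G)_{<a}$ versus $(I_Q)_{>2a}$) does not pin anything down: the integrality condition $n\mid\sum d_i$ in Theorem~\ref{gaeta} is automatically satisfied for any Gorenstein degree sequence, so it cannot by itself distinguish even from odd $t$, and you explicitly leave the outcome (``whichever way it falls out'') undetermined. The paper's proof of the converse does not use Theorem~\ref{gaeta} at all; it invokes the structure theory of codimension-$3$ almost complete intersections from \cite{Z} via the invariant $d^*$: when $t$ is even the structure forces minimal first syzygies in degrees $d^*+h,\,2a,\,2a$ (if $d^*=a$) or three in degree $a+h$ (if $d^*=h$), and in either case a syzygy of degree $a+h$ appears. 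You would need either to import that structural result or to produce an explicit argument showing that $t$ even with no degree-$(a+h)$ syzygy leads to a Gorenstein link with an even number of minimal generators, which is the actual contradiction.
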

\begin{proof}
We can perform a liaison of $I_Q$ in a complete intersection ideal $I_Z$ of type $(a,a,h).$ Then $I_G=I_Z:I_Q$ is a Gorenstein ideal, such that $H_G=H_{\CI(h-a,a,a)}.$ We set $I_Z=(f_a,g_a,f_h),$ with $\deg f_a=\deg g_a=a$ and $\deg f_h=h.$
\par
Let us suppose that $A_Q$ has a minimal first syzygy of degree $a+h.$ 
Then  $a+h \le 3a-1,$ hence $a+1 \le h \le 2a-1.$ Consequently $A_Q$ has a resolution of type
\begin{multline*}
 0 \to G'_3 \oplus R(-(a+h)) \oplus R(-3a) \to \\ 
\to G'_3 \oplus R(-2a)^3 \oplus R(-(a+h))\oplus R(-h) \to R(-a)^3 \oplus R(-h) \to R,
\end{multline*}
with $\rank G'_3=t-2.$
Using mapping cone, we can get a resolution (not necessarily minimal) of $I_G.$ In particular, since 
$\vartheta_Z=2a+h,$ we obtain for $I_G$ generators of degrees $h-a,a,a,a,h$ and other $t-2$ generators of degrees greater than $a.$ Among these generators, the only ones that can eventually be non-minimal for $I_G$ are $f_a,g_a,f_h.$
But $H_G=H_{\CI(h-a,a,a)},$ so $\vartheta_G=a+h;$ therefore $f_h$ is not minimal for $I_G,$ since, otherwise, $I_G$ should have a minimal first syzygy of degree $a.$ This is impossible since $I_G$ has only one minimal generator of degree smaller than $a.$ 
\par
Consequently $I_G$ has one generator of degree $h-a,$ three generators of degree $a$ and other $t-2$ generators of degrees greater than $a.$ But, again, $H_G=H_{\CI(h-a,a,a)},$ so $I_G$ has exactly $2$ minimal generators of degree $a,$ therefore $I_G$ has $t+3$ minimal generators. Consequently $t+3$ is an odd number, i.e. $t$ is even.
\par
If $t$ is even and $d^*=a,$ then, for the structure of first syzygies of an almost complete intersection artinian algebra of codimension $3,$ $I_Q$ has minimal first syzygies of degrees $a+h,2a,2a.$ 
\par
If $t$ is even and $d^*=h,$ then, for the same reason, $I_Q$ has three minimal first syzygies of degrees $a+h.$ 
\end{proof}

\begin{prp}\label{apiuh}
Let $A_Q=R/I_Q$ be an almost complete intersection artinian algebra with Hilbert function $H_{\CI(a,a,a)}.$
Let us suppose that $t$ is even.
Then $A_Q$ has only one minimal first syzygy of degree $a+h.$
\end{prp}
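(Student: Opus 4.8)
The plan is to show that $A_Q$ has at most one minimal first syzygy of degree $a+h$; since $t$ is even, Proposition \ref{aph} already produces at least one, so this forces exactly one. First note $a+h\le 3a-1$, hence $a+1\le h\le 2a-1$. By the description of the minimal first syzygies of a codimension $3$ almost complete intersection recalled in the proof of Proposition \ref{aph}, an algebra $A_Q$ as in the statement has either exactly one minimal first syzygy of degree $a+h$ (when $d^*=a$) or exactly three (when $d^*=h$); so it suffices to exclude the case $d^*=h$, i.e. to prove $\beta_{2,a+h}(R/I_Q)\le 2$.

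I would run the same liaison as in the proof of Proposition \ref{aph}. Since $k$ is infinite one may choose two of the three degree-$a$ minimal generators of $I_Q$ together with the degree-$h$ one so that they form a regular sequence; let $I_Z$ be the complete intersection ideal of type $(a,a,h)$ they generate, so $I_Z\subseteq I_Q$ and $\vartheta_Z=2a+h$, and set $I_G=I_Z:I_Q$. Then $I_G$ is Gorenstein with $H_G=H_{\CI(h-a,a,a)}$, hence $\vartheta_G=a+h$ and the last free module in the minimal resolution of $R/I_G$ is $R(-(a+h))$. I would then record that $I_G$ has a unique minimal generator of degree $<a$ (of degree $h-a$) and — the point that matters here — that $I_G$ has no minimal generator of degree $h$: by self-duality of the minimal resolution of the Gorenstein ring $R/I_G$ (top twist $a+h$), the number of minimal generators of $I_G$ of degree $h$ equals $\beta_{2,a}(R/I_G)$, the number of minimal first syzygies of $I_G$ in degree $a$; such a syzygy would be a relation in degree $a$ among generators of degree $\le a$, hence among the single generator of degree $h-a$ (with a coefficient of positive degree $2a-h$) and scalar multiples of the degree-$a$ generators, and since the latter belong to a minimal generating set of $I_G$ no such relation can be minimal. (This is exactly the argument used in Proposition \ref{aph} to see that $f_h$ becomes redundant in $I_G$.)

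The main step is the mapping cone. Dualizing the minimal free resolution of $R/I_G$ and comparing with the Koszul complex resolving $R/I_Z$ gives a (generally non-minimal) graded free resolution of $R/I_Q$ whose second free module is
\[
 C_2\;=\;\Big(\bigoplus_i R\big(-(a+g_i)\big)\Big)\;\oplus\;R(-2a)\;\oplus\;R\big(-(a+h)\big)^{2},
\]
where $g_1,\dots,g_{2n+1}$ are the degrees of the minimal generators of $I_G$: the last three summands are the degree-$2$ part of the Koszul complex of $I_Z$, and the remaining summands come from $\operatorname{Hom}_R(-,R)$ of the module of first syzygies of $I_G$ (which, by self-duality, is $\bigoplus_i R(-((a+h)-g_i))$) twisted by $\vartheta_Z=2a+h$. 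Since $I_G$ has no minimal generator of degree $h$, no $R(-(a+g_i))$ equals $R(-(a+h))$, and $a+h\ne 2a$ as $h>a$; hence $C_2$ contains exactly two copies of $R(-(a+h))$. As the minimal free resolution of $R/I_Q$ is obtained from this one by splitting off trivial subcomplexes, $\beta_{2,a+h}(R/I_Q)\le 2$, and with the first paragraph this yields $\beta_{2,a+h}(R/I_Q)=1$: $A_Q$ has exactly one minimal first syzygy of degree $a+h$.

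The delicate point — the "careful liaison argument" alluded to in the introduction — is not the cancellation bookkeeping but the choice of $I_Z$: a regular sequence of type $(a,a,a)$ inside $I_Q$ cannot exist (its three degree-$a$ generators would generate $\CI(a,a,a)$, and modulo them the fourth generator would vanish, contradicting that $I_Q$ needs four minimal generators), so the degree-$h$ generator must sit in $I_Z$, and one needs a genericity argument over the infinite field $k$ to be sure the resulting ideal has height $3$ and so links $I_Q$ to a Gorenstein ideal with the prescribed Hilbert function $H_{\CI(h-a,a,a)}$. Once that Gorenstein ideal is correctly identified, the vanishing $\beta_{2,a}(R/I_G)=0$ and the count of $R(-(a+h))$-summands in $C_2$ are routine.
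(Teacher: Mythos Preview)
Your reverse mapping-cone bound $\beta_{2,a+h}(R/I_Q)\le 2$ is correct and is a pleasant dual of the paper's computation. The gap is in your first paragraph: the structural information quoted from the proof of Proposition~\ref{aph} does \emph{not} give the dichotomy ``exactly one (if $d^*=a$) or exactly three (if $d^*=h$)''. What is recorded there is only that three of the $t+3$ minimal first syzygies have degrees $d^*+d_j$ for the three generators other than $d^*$; the remaining $t$ syzygies sit in the ghost module $G_3$, and nothing proved prior to Proposition~\ref{apiuh} prevents $G_3$ from carrying a further copy of $R(-(a+h))$. So your argument yields only $1\le\beta_{2,a+h}\le 2$, and the value $2$ has not been excluded.

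The paper closes precisely this gap with a parity count on the \emph{forward} mapping cone (from $I_Q$ to $I_G$), which your reverse cone cannot see. In that cone the generators of $I_G$ are the three generators $f_a,g_a,f_h$ of $I_Z$ together with $t$ elements coming from $F_3^\vee(-\vartheta_Z)$, and only the three Koszul generators can become redundant. Since $I_G$ is codimension-$3$ Gorenstein it has an odd number of minimal generators; with $t$ even this forces an \emph{even} number of redundancies among $\{f_a,g_a,f_h\}$, and since $f_h$ is redundant there are exactly two. Hence exactly one of $f_a,g_a$ survives as a minimal generator of $I_G$, so the number of degree-$a$ minimal generators of $I_G$ equals $u+1$ (the survivor plus the $u$ contributions from the $u$ copies of $R(-(a+h))$ in $F_3$). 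Comparing with the value $2$ forced by $H_G=H_{\CI(h-a,a,a)}$ gives $u=1$. In your $C_2$ both copies of $R(-(a+h))$ come from $K_2$, and without the parity input you have no mechanism to force one of them to split off; the ``careful liaison'' issue is this cancellation count, not the choice of the regular sequence.
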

\begin{proof}
Let $r$ be the number of minimal first syzygies of $I_Q$ of degree $a+h.$ 
By Proposition \ref{aph}, $u \ge 1.$ 
Moreover $I_Q$ has exactly $u$ minimal second syzygies of degree $a+h.$ 
\par
We link again $I_Q$ in a complete intersection ideal $I_Z$ of type $a,a,h.$ Let $I_G=I_Z:I_Q.$ $I_G$ has only two minimal generators of degree $a.$ Since $t$ is even, only one of the three generators of $I_Z$ is minimal for $I_G,$ so $I_G$ has $u+1=2$ minimal generators of degree $a.$ Hence $u=1.$
\end{proof}

\begin{cor}\label{dstar}
Let $A_Q=R/I_Q$ be an almost complete intersection artinian algebra with Hilbert function $H_{\CI(a,a,a)}.$
Let $(a,a,a,h)$ be the vector of the minimal generators degrees of $I_Q,$ $a+1 \le h \le 3a-2.$
\begin{itemize}
\item[1)] If $t_Q$ is even then $d^*=a.$
\item[2)] If $t_Q$ is odd then $d^*=h.$
\end{itemize}
\end{cor}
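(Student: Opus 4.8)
The plan is to deduce the statement from Propositions \ref{aph} and \ref{apiuh} together with the description of the first syzygies of a codimension $3$ almost complete intersection given in \cite{Z} (Theorem 4.1 and Proposition 4.3 there), which is the same input already used in the proof of Proposition \ref{aph}. From \cite{Z} I take that $d^{*}$ is the degree of one of the four minimal generators of $I_{Q}$, hence $d^{*}\in\{a,h\}$, and that the value of $d^{*}$ is recorded in the minimal first syzygies of $I_{Q}$ as follows: the value $d^{*}=a$ forces, among the minimal first syzygies of $I_{Q}$, three of degrees $2a$, $2a$, $a+h$, while $d^{*}=h$ together with $t_{Q}$ even forces three minimal first syzygies of degree $a+h$.

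Granting this, part $1)$ is quick. Assume $t_{Q}$ is even. By Proposition \ref{apiuh} the ideal $I_{Q}$ has exactly one minimal first syzygy of degree $a+h$; if we had $d^{*}=h$ there would be three of them, a contradiction, so $d^{*}=a$.

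For part $2)$ assume $t_{Q}$ is odd. First suppose $a+1\le h\le 2a-1$. If $d^{*}=a$ then, by the structure theorem, $I_{Q}$ has a minimal first syzygy of degree $a+h$, whence Proposition \ref{aph} would force $t_{Q}$ to be even, a contradiction; thus $d^{*}=h$. Now suppose $2a\le h\le 3a-2$. Since $A_{Q}$ is artinian, $\operatorname{reg}(A_{Q})$ equals the largest degree in which $H_{\CI(a,a,a)}$ is nonzero, that is $3a-3$; hence every minimal first syzygy of $I_{Q}$ has degree at most $3a-1<a+h$, so $I_{Q}$ has no minimal first syzygy of degree $a+h$. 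By the structure theorem this again rules out $d^{*}=a$, and therefore $d^{*}=h$.

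The step I expect to be the real obstacle is making the appeal to \cite{Z} precise, i.e.\ verifying that in our Hilbert function the three syzygies predicted by the structure theorem in the case $d^{*}=a$ are genuinely part of a minimal generating set of the first syzygy module (no cancellation against the generators of degrees $a,a,a,h$ occurs), so that ``$d^{*}=a$'' really does entail a minimal first syzygy of degree $a+h$; the remaining bookkeeping, in particular the separation $h\le 2a-1$ versus $h\ge 2a$ and the matching with the parity of $t_{Q}$ through Propositions \ref{aph} and \ref{apiuh}, is then routine.
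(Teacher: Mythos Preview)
Your proposal is correct and follows essentially the same approach as the paper: both parts are deduced from Propositions \ref{aph} and \ref{apiuh} together with the structural fact from \cite{Z} that $d^{*}=a$ forces a minimal first syzygy of degree $a+h$ while $d^{*}=h$ (with $t_{Q}$ even) forces three. The paper's argument is slightly more streamlined---for part~2) it simply invokes Proposition~\ref{aph} once (no syzygy of degree $a+h$ when $t_{Q}$ is odd, hence $d^{*}\ne a$) without your case split on $h$, and it does not pause over the minimality concern you flag, since that is already implicit in the appeal to \cite{Z} made in the proof of Proposition~\ref{aph}.
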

\begin{proof}
If $t_Q$ is even then, by Proposition \ref{apiuh}, $A_Q$ has only one minimal first syzygy of degree $a+h.$ Hence $d^*\ne h,$ i.e. $d^*=a.$
\par
If $t_Q$ is odd then $A_Q$ has not first syzygies of degree $a+h$ by Proposition \ref{aph}. 
So we get $d^*=h.$
\end{proof}

\begin{cor}\label{todd}
Let $A_Q=R/I_Q$ be an almost complete intersection artinian algebra with Hilbert function $H_{\CI(a,a,a)}.$
Let $(a,a,a,h)$ be the vector of the minimal generators degrees of $I_Q,$ $a+1 \le h \le 3a-2.$
\par
If $h \ge 2a$ then $t_Q$ is odd.
\end{cor}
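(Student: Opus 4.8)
The statement to prove is Corollary \ref{todd}: if $h \ge 2a$, then $t_Q$ is odd. The natural strategy is to argue by contradiction, assuming $t_Q$ is even, and combine the previous results with a degree count on the linked Gorenstein ideal.

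So suppose $t_Q$ is even. By Corollary \ref{dstar}, this forces $d^* = a$, and by Proposition \ref{aph} (or directly by Proposition \ref{apiuh}), $A_Q$ has a minimal first syzygy of degree $a+h$. But the proof of Proposition \ref{aph} already observes that the existence of such a syzygy requires $a+h \le 3a-1$, i.e. $h \le 2a-1$. This immediately contradicts the hypothesis $h \ge 2a$. Hence $t_Q$ cannot be even, so it is odd.

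In other words, the whole content is that the inequality $h \ge 2a$ is incompatible with having a first syzygy in degree $a+h$ (which lies beyond the top possible syzygy degree $3a-1$ coming from the Koszul-type syzygies among the three degree-$a$ generators and the degree-$h$ generator), and an even $t_Q$ would force precisely such a syzygy. The only mild point to be careful about is to cite the right link in the chain: even $t_Q$ $\Rightarrow$ $d^*=a$ (Corollary \ref{dstar}), and then the structure of first syzygies of a codimension-$3$ almost complete intersection (as recalled before Proposition \ref{aph}, with reference to \cite{Z}) together with Proposition \ref{aph} gives a minimal first syzygy of degree $a+h$; alternatively invoke Proposition \ref{apiuh} directly, which already presupposes $t$ even and produces such a syzygy. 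Either way the degree bound $a+h \le 3a-1$ appearing in the proof of Proposition \ref{aph} closes the argument.

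There is essentially no obstacle here — this is a short corollary extracting a numerical consequence of the preceding propositions. The only thing to verify is that the degree restriction $a+1 \le h \le 3a-2$ leaves room for the contradiction, which it does: $2a \le h \le 3a-2$ is a nonempty range (as $a \ge 2$), so the hypothesis is not vacuous, and on that range $a+h \ge 3a > 3a-1$, ruling out the syzygy forced by an even $t_Q$.
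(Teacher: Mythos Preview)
Your proposal is correct and follows essentially the same argument as the paper: assume $t_Q$ even, invoke Proposition \ref{aph} to obtain a minimal first syzygy of degree $a+h$, and then use the bound $a+h \le 3a-1$ (which the paper justifies by noting the highest second syzygy has degree $3a$) to force $h \le 2a-1$, contradicting $h \ge 2a$. The detour through Corollary \ref{dstar} and $d^*$ is unnecessary but harmless.
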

\begin{proof}
If $t_Q$ is even then by Proposition \ref{aph} $A_Q$ has a minimal first syzygy of degree $a+h.$ 
The highest second syzygy of $A_Q$ has degree $3a,$ so $a+h \le 3a-1,$ hence $a+1 \le h \le 2a-1.$
\end{proof}

\begin{prp}
Let $A_Q=R/I_Q$ be an almost complete intersection artinian algebra with Hilbert function $H_{\CI(a,a,a)}.$ 
Let $(a,a,a,a+1)$ be the vector of the minimal generators degrees of $I_Q.$
\par
Then $t_Q=2$ and the graded minimal free resolution of $A_Q$ is
  \begin{multline*}
 0 \to R(-(2a+1)) \oplus R(-3a) \to R(-2a)^3 \oplus R(-(2a+1))\oplus R(-(a+1)) \to \\ 
 \to R(-a)^3 \oplus R(-(a+1)) \to R.
  \end{multline*}
An example of this is the monomial ideal $I_Q=(x^a,y^{a+1},z^a,x^{a-1}y),$ in $k[x,y,z].$
\end{prp}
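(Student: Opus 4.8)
The plan is to combine the general structural constraints already established in this section with an analysis of the extremal case $h = a+1$. First I would observe that since $h = a+1 < 2a$ (as $a \ge 2$), Corollary \ref{todd} does not force parity, so both possibilities for $t_Q$ are a priori open, and I must rule out $t_Q$ odd and $t_Q \ge 3$. The key numerical input is the degree bound: the highest minimal second syzygy of $A_Q$ has degree $3a$, and the generators of $I_Q$ have degrees $a,a,a,a+1$, so the minimal first syzygies of $I_Q$ have degrees that are at least $a+1$ and must fit below $3a$. In fact, $\Delta^3 H_Q(2a) = 3$ already guarantees the three first syzygies of degree $2a$ listed in the resolution displayed at the start of the section, and I would show these account for essentially all the first-syzygy room.

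The main step is to show $t_Q = 2$, equivalently $\operatorname{rank} G_3 = 1$ in the generic resolution
$$0 \to G_3 \oplus R(-3a) \to G_3 \oplus R(-2a)^3 \oplus R(-h) \to R(-a)^3 \oplus R(-h) \to R.$$
To do this I would run the liaison argument of Proposition \ref{aph}: link $I_Q$ in a complete intersection $I_Z$ of type $(a,a,a+1)$, so that $I_G = I_Z : I_Q$ is Gorenstein with $H_G = H_{\CI(1,a,a)}$. But $H_{\CI(1,a,a)} = H_{\CI(a,a)}$ shifted, i.e. this is the Hilbert function of a complete intersection of codimension $2$ up to the linear form; concretely $\vartheta_G = 2a+1$ and $I_G$, having a generator of degree $1$, forces very rigid Betti numbers — indeed $I_G$ is (after quotienting by the linear form) a codimension-$2$ complete intersection of type $(a,a)$, whose resolution is the Koszul complex. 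Pulling this back through the linear form and mapping-coning as in Proposition \ref{aph}, the only generators of $I_Z$ that can fail to be minimal for $I_G$ are $f_a, g_a$ (degree $a$) and $f_{a+1}$ (degree $a+1$); counting shows $I_G$ has exactly $3$ minimal generators (degrees $1, a, a$), hence $t_Q + 3 = \#\{\text{min. gens of } I_G\}$ gives... here I need to be careful: the mapping cone produces $t_Q - 1 + 5$ generators before cancellation, and matching against the forced Betti numbers of the rigid $I_G$ forces $t_Q = 2$. Then Proposition \ref{aph} (or rather its proof, since $t_Q = 2$ is even) shows $A_Q$ has a single minimal first syzygy of degree $a+h = 2a+1$, and by Corollary \ref{dstar} we have $d^* = a$; combined with the structure of first syzygies of a codimension-$3$ almost complete intersection, the first syzygies are exactly in degrees $2a, 2a, 2a+1$. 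The Betti numbers of the displayed resolution then follow by additivity of the Hilbert series, since $G_3 = R(-2a)$ is forced.

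I expect the main obstacle to be the bookkeeping in the mapping-cone count: one must verify that exactly the right generators of $I_Z$ become non-minimal for $I_G$, using that $I_G$ contains the linear form and that $\vartheta_G = 2a+1$ is small, which tightly pins down where minimal generators and syzygies of $I_G$ can sit. Once $t_Q = 2$ is secured, the rest is the forced resolution shape plus a Hilbert-function check. Finally, for the example $I_Q = (x^a, y^{a+1}, z^a, x^{a-1}y)$, I would verify directly that these four monomials minimally generate $I_Q$ (no one is in the ideal of the others, since $x^{a-1}y$ is not divisible by $x^a, y^{a+1}$, or $z^a$), and that $H_Q = H_{\CI(a,a,a)}$ by the liaison computation in the previous section's Proposition (taking $r = 3$, $a_1 = a_2 = a_3 = a$, $h = a+1$, noting $a+1 \le h \le 2a-1$ when $a \ge 2$), or alternatively by a direct monomial count.
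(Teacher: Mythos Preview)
Your approach is essentially the paper's: link $I_Q$ in a complete intersection $I_Z$ of type $(a,a,a+1)$, observe that $I_G=I_Z:I_Q$ is Gorenstein with $H_G=H_{\CI(1,a,a)}$, and hence (via the linear form and codimension-$2$ Gorenstein $=$ CI) that $I_G$ is itself a complete intersection of type $(1,a,a)$. This is exactly what the paper does.

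Where you diverge is in the bookkeeping after this point. The paper argues more directly: since $I_G$ is a $\CI(1,a,a)$ and $I_Z$ is a $\CI(a,a,a+1)$ with $I_Z\subset I_G$, the liaison $I_Q=I_Z:I_G$ is an explicit link of two complete intersections, and there are only two configurations for how the degree-$a$ generators of $I_Z$ sit among the minimal generators of $I_G$ (both minimal, or exactly one minimal). The first gives $I_Q$ a $\CI(a,a,a)$, contradicting the ACI hypothesis; the second gives the displayed resolution immediately from the mapping cone on two Koszul complexes. Your route instead bounds $t_Q$ by counting mapping-cone generators of $I_G$ against its forced three minimal generators, then invokes Propositions~\ref{aph}--\ref{apiuh} and Corollary~\ref{dstar} to pin down $G_3$. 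That works, but is longer.

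Two small slips to fix. First, the mapping cone produces $t_Q+3$ generators of $I_G$ before cancellation (three from $I_Z$, one from $R(-3a)$ dualized, and $t_Q-1$ from $G_3$ dualized), not $t_Q+4$; this gives $t_Q\le 3$, and you then need Proposition~\ref{aph} (odd $t_Q$ forbids a first syzygy in degree $2a+1$) to exclude $t_Q=3$. Second, once $t_Q=2$, the forced module is $G_3=R(-(2a+1))$, not $R(-2a)$: the dual of $G_3$ must supply one of the degree-$a$ minimal generators of $I_G$, so $3a+1-j=a$, i.e.\ $j=2a+1$.
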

\begin{proof}
We link $I_Q$ in a complete intersection ideal $I_Z$ of type $(a,a,a+1).$ Let $I_G=I_Z:I_Q.$
It is a Gorenstein ideal with Hilbert function $H_{\CI(1,a,a)},$ so $I_G$ is a complete intersection 
ideal of type $(1,a,a).$ So we have a liaison of a complete intersection $\CI(1,a,a)$ in a complete intersection $\CI(a,a,a+1).$
\par
We have two possibilities. In the liaison we can use two minimal generators of degree $a$ or one minimal and one not minimal generator of degree $a.$ In the first case we get a complete intersection $\CI(a,a,a).$ In the second case we get an almost complete intersection whose grade minimal free resolution is the one requested.
\end{proof}

According to what we said, if $t=t_Q$ is even, the graded minimal free resolution of $A_Q$ is of the type
 \begin{multline*}
 0 \to G \oplus R(-(a+h)) \oplus R(-3a) \to \\ 
\to G \oplus R(-2a)^3 \oplus R(-(a+h))\oplus R(-h) \to R(-a)^3 \oplus R(-h) \to R
\end{multline*}
$a \ge 2,$ $a+1 \le h \le 2a-1,$
where $G$ is a graded free module, $\rank G=t-2,$ $G \cong G^{\vee}(-d),$ and $d=3a+h.$
Therefore we have a decomposition $G \cong L \oplus L^{\vee}(-d),$ where $L$ is a graded free module, 
$\rank L=\frac{t-2}{2}.$ So we obtain the graded minimal free resolution
\begin{multline*}
 0 \to L \oplus L^{\vee}(-(3a+h))  \oplus R(-(a+h)) \oplus R(-3a) \to \\ 
\to L \oplus L^{\vee}(-(3a+h)) \oplus R(-2a)^3 \oplus R(-(a+h))\oplus R(-h) \to \\ \to R(-a)^3 \oplus R(-h) \to R.
\end{multline*}
Analogously, if $t=t_Q$ is odd, the graded minimal free resolution of $A_Q$ is of the type
\begin{multline*}
 0 \to L \oplus L^{\vee}(-(3a+h))  \oplus R(-3a) \to \\ 
\to L \oplus L^{\vee}(-(3a+h)) \oplus R(-2a)^3 \oplus R(-h) \to R(-a)^3 \oplus R(-h) \to R
\end{multline*}
$a \ge 2,$ $a+2 \le h \le 3a-2,$ $\rank L=\frac{t-1}{2}.$
\par
Now we prove that if we fix $H=H_{\CI(a,a,a)},$ $h$ and the parity of $t_Q$ there exists a maximal Betti sequence $\beta$ of almost complete intersection artinian algebras compatible with this data and all the other Betti sequences can be obtained from $\beta$ carrying out suitable cancellations. We will see that not all cancellations are allowed and we will specify which are.

\begin{thm}\label{t1}
Let $H=H_{\CI(a,a,a)}.$ Let $a+1 \le h \le 2a-1.$ We set
$$F=
\begin{cases}
R(-(2a+1)) \oplus \ldots \oplus R(-(a+h)) & \text{ if $h-a$ is odd} \\
R(-(2a+1)) \oplus \ldots \oplus R(-(a+h)) \oplus R(-(\frac{3a+h}{2})) & \text{ if $h-a$ is even}
\end{cases}$$
Then there exists an almost complete intersection artinian algebra with Hilbert function $H$ and graded minimal free resolution 
\begin{multline*}
 0 \to F \oplus R(-3a) \to  
 R(-h) \oplus R(-2a)^3 \oplus F \to 
 R(-a)^3 \oplus R(-h) \to R
\end{multline*}
%for every $h$ such that $a+1 \le h \le 2a-1.$ 
%\par
Moreover let ${\mathcal A}_{H,h}$ be the set of the almost complete intersection artinian algebras with Hilbert function $H,$ generated in degrees $(a,a,a,h),$ $a+1 \le h \le 2a-1,$ and $t$ even. Then
\begin{itemize}
 \item[1)] these graded Betti numbers are maximal among algebras in ${\mathcal A}_{H,h};$
 \item[2)] the only couples of cancellations allowed, among algebras in ${\mathcal A}_{H,h},$ are  
 $R(-i) \oplus R(-(3a+h-i)),$ for $2a+1 \le i \le a+h-1.$
\end{itemize}
\end{thm}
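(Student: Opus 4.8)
The plan is to produce the asserted algebra explicitly by a liaison construction, and then to get the two sharpness statements by tracking degrees through mapping cones in both directions. For the existence part, I would link $I_Q$ in a complete intersection $I_Z$ of type $(a,a,h)$ to a Gorenstein ideal $I_G=I_Z:I_Q$ with $H_G=H_{\CI(h-a,a,a)}$; conversely, I want to choose $I_G$ first, with the \emph{smallest} possible minimal generator degrees compatible with this Hilbert function, and then recover $I_Q$ by linking back. By Theorem \ref{gaeta} and the $\Alt(\delta)$ construction recalled in Section 2, a Gorenstein ideal of codimension $3$ with Hilbert function $H_{\CI(h-a,a,a)}$ (so $\vartheta_G=a+h$) exists with a generator degree sequence that is as ``spread out'' as the Gaeta conditions permit; concretely I expect the extreme choice to have one generator in degree $h-a$, two in degree $a$, one in degree $2a$, one in degree $a+h$, plus a symmetric middle block of generators running through degrees $2a+1,\dots,a+h-1$ (and, when $h-a$ is even, an extra generator in the central degree $\frac{3a+h}{2}$, which is forced by parity since $2n+1$ must be odd). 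Applying Theorem \ref{c3b}, I need $\min(\delta)\le(a,a,h)$ componentwise so that $I_G$ actually contains a regular sequence of type $(a,a,h)$; this is where the ``careful liaison argument'' mentioned in the introduction enters, and I expect it to be the main obstacle — one must verify that the explicit minimal element computed in \cite{RZ3} for this $\delta$ is dominated by $(a,a,h)$, using $a+1\le h\le 2a-1$. Granting that, linking $I_G$ in $I_Z=\CI(a,a,h)$ and using the mapping cone produces a resolution of $I_Q$; the generators of $I_G$ in degrees $h-a,a,a$ correspond to the shifts $R(-a)^2,R(-h)$ after twisting by $\vartheta_Z=2a+h$, the degree-$2a$ generator of $I_G$ gives the fourth generator $R(-a)$ of $I_Q$ (so $d^*=a$, consistent with Corollary \ref{dstar}), the degree-$(a+h)$ generator of $I_G$ gives the distinguished first syzygy $R(-(a+h))$ of $I_Q$ (consistent with Propositions \ref{aph}, \ref{apiuh}), and the middle block of $I_G$ becomes exactly $F$; one checks the mapping cone is already minimal here because no two consecutive twists coincide.

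For part 1), maximality of these Betti numbers among algebras in ${\mathcal A}_{H,h}$: let $A_{Q'}$ be any member of ${\mathcal A}_{H,h}$, with resolution as in the displayed ``$t$ even'' form, $G'\cong L'\oplus L'^{\vee}(-(3a+h))$, $\rank L'=\frac{t'-2}{2}$. Link again in $\CI(a,a,h)$: by Proposition \ref{apiuh}, $I_{G'}$ has exactly two minimal generators of degree $a$, and by the argument in Proposition \ref{aph} only $t'-2$ of the mapping-cone generators of degrees $>a$ can survive as minimal, so $I_{G'}$ has generator degree sequence obtained from $(h-a,a,a)$ together with $L'$-shifts and their $(a+h)$-complements plus the central $R(-2a),R(-(a+h))$. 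Now the Gaeta conditions (Theorem \ref{gaeta}, condition 2) applied to $\delta'=\deg(I_{G'})$ force every generator degree to lie strictly between $h-a$ and $a+h$, hence in $\{a,\dots,a+h-1\}$; combined with $\vartheta_{G'}=a+h$ being symmetric, the degree multiset of $I_{G'}$ is bounded above (in the dominance order on Betti tables) by that of the $\Alt$-constructed extreme $I_G$ above. Translating back through the liaison — which reverses inequalities on the parts coming from the ``co-resolution'' but preserves them on the mapping-cone part — gives $\beta_{ij}(A_{Q'})\le\beta_{ij}(A_Q)$ for all $i,j$. I would phrase this cleanly using the fact (Section 2) that liaison in a fixed complete intersection identifies the graded Betti numbers of $R/I_Q$ with those of $R/I_G$ via the mapping cone minus the CI contribution, so that ``maximal Betti table for $I_Q$'' is equivalent to ``maximal Betti table for $I_G$ among Gorenstein ideals with Hilbert function $H_{\CI(h-a,a,a)}$ and $\min(\delta)\le(a,a,h)$'', and the latter maximum is realized by the $\Alt(\delta)$ construction by the results recalled after Theorem \ref{gaeta}.

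For part 2), the allowed cancellable couples: in the displayed minimal resolution of $A_Q$ the only pairs of equal twists appearing in homological positions $i$ and $i+1$ are $R(-i)\oplus R(-(3a+h-i))$ with $2a+1\le i\le a+h-1$ (these are the ``$L\oplus L^{\vee}(-(3a+h))$'' summands together with the central self-paired one when $h-a$ is even), so \emph{a priori} only these could be cancellable in the sense of Definition \ref{canc}; the module $R(-(a+h))\oplus R(-3a)$ cannot be cancelled because $R(-3a)$ is the unique top-degree generator of the last free module, $R(-2a)^3$ must stay by $\Delta^3H_Q(2a)=3$, and $R(-h),R(-a)^3$ are minimal generators of $I_Q$ with $h$ odd-or-even but always $<2a$. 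To see each couple $R(-i)\oplus R(-(3a+h-i))$, $2a+1\le i\le a+h-1$, \emph{is} cancellable, I would exhibit, for each admissible value of $t$ with the right parity, a member of ${\mathcal A}_{H,h}$ whose $L$ omits the shift $R(-i)$ — equivalently a Gorenstein $I_G$ with $H_G=H_{\CI(h-a,a,a)}$, $\min(\delta)\le(a,a,h)$, and $\delta$ missing the complementary pair of degrees. Here I invoke Theorem \ref{gaeta} again: removing a symmetric pair $\{i,3a+h-i\}$ (or a single central generator when $h-a$ is even and $i=\frac{3a+h}{2}$) from $\delta$ while adjusting multiplicities in the remaining admissible degrees keeps $\vartheta$ an integer and preserves the Gaeta inequalities precisely when $2a+1\le i\le a+h-1$; one then checks $\min$ of the new $\delta$ is still $\le(a,a,h)$ using the explicit formula of \cite{RZ3}. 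Linking back gives the desired $R/J$ with the prescribed consecutive cancellation, so by \cite{Pa} these are exactly the cancellations connected by deformations, proving 2). The technical heart throughout — and the step I expect to be hardest to make watertight — is the verification that $\min(\delta)\le(a,a,h)$ for the extreme $\delta$ and for each of the ``one-pair-removed'' $\delta$'s, i.e. the careful bookkeeping with Theorem \ref{c3b}.
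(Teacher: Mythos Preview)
Your overall liaison strategy matches the paper's, but your concrete $\delta$ for $I_G$ is wrong: you have written down the shifts in the $I_Q$-resolution rather than the generator degrees of $I_G$. Since $\vartheta_G=a+h$, no minimal generator of $I_G$ can have degree $\ge a+h$, so your ``one in degree $2a$, one in degree $a+h$, middle block $2a+1,\dots,a+h-1$, central $\frac{3a+h}{2}$'' is impossible. Under the mapping cone with $\vartheta_Z=2a+h$, the shifts $2a+1,\dots,a+h$ in $F$ correspond to $I_G$-generators in degrees $h-1,\dots,a$; the paper's correct choice is
\[
\delta=\begin{cases}(h-a,\,a,\,a,\,a+1,\dots,h-1)&h-a\text{ odd},\\(h-a,\,a,\,a,\,a+1,\dots,\tfrac{a+h}{2},\tfrac{a+h}{2},\dots,h-1)&h-a\text{ even},\end{cases}
\]
with the central degree $\tfrac{a+h}{2}$, not $\tfrac{3a+h}{2}$. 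The paper also specifies that in the linking $I_Z=\CI(a,a,h)$ one takes exactly one of the two degree-$a$ generators to be minimal for $I_G$; this is what forces $t$ even (the case $t$ odd in Theorem~\ref{t2} uses both minimal).

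More importantly, your maximality argument has a genuine gap. The Gaeta inequalities only bound the \emph{range} of generator degrees of $I_{G'}$; they do not bound multiplicities, and the unconstrained maximal Betti numbers for a Gorenstein algebra with $H_G=H_{\CI(h-a,a,a)}$ (from \cite{RZ2}, Prop.~3.7 and Rem.~3.8) have \emph{two} generators in each degree $a+1,\dots,h-1$, not one. The paper's key step---which you are missing---is that if $I_G$ has two minimal generators in some degree $n$ with $a+1\le n\le\lfloor\tfrac{a+h}{2}\rfloor$, then by \cite{RZ5}, Thm.~4.4 (or \cite{RZ3}, Thm.~3.6) it cannot contain a regular sequence of type $(a,a,h)$. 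This is precisely what cuts the multiplicities from two down to one and singles out $\delta$ above as maximal under the liaison constraint; your appeal to ``dominance order'' and Gaeta conditions does not establish it. The same pair of references from \cite{RZ2} then handles the allowed cancellations directly on the Gorenstein side, which is how the paper dispatches part~2).
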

\begin{proof}
We set
$$\delta=
\begin{cases} (h-a,a,a,a+1,\ldots,h-1) & \text{ if $h-a$ is odd } \\ 
\left(h-a,a,a,a+1,\ldots,\frac{a+h}{2},\frac{a+h}{2},\ldots,h-1\right) & \text{ if $h-a$ is even }
\end{cases}$$
%The sequence $\delta$ satisfies the Gaeta condition 
By Theorem \ref{gaeta} and by Theorem 3.6 in \cite{RZ3}, there exists a Gorenstein ideal $I_G$ with sequence of the degrees of the minimal generators $\delta$ and containing a complete intersection ideal $I_Z$ of type $(a,a,h),$ with a generator of degree $a$ minimal and a generator of degree $a$ not minimal
for $I_G.$
The linked ideal $I_Q=I_Z:I_G$ has the requested graded minimal free resolution.
\par
Let $A_Q=R/I_Q$ be an almost complete intersection artinian algebra with Hilbert function 
 $H,$ $a+1 \le h \le 2a-1,$ and $t$ even. We can link $I_Q$ in a complete intersection ideal $I_Z$ of type $(a,a,h).$ Let $I_G=I_Z:I_Q.$ Then $A_G=R/I_G$ is a Gorenstein algebra such that $H_G=H_{\CI(h-a,a,a)}.$  We recall that the resolution of $A_G$ is determined by the degrees of the minimal generators of $I_G.$ The maximal Betti numbers relatively to the Hilbert function were computed in \cite{RZ2}, Proposition 3.7 and Remark 3.8. Since $\Delta^2H_{\CI(h-a,a,a)}(n)=-2,$ for $a \le n \le h-1,$ they are given by the resolution
  $$\ldots \to R(-(h-a))\oplus R(-a)^2 \oplus R(-(a+1))^2 \oplus \ldots \oplus R(-(h-1))^2 \to R.$$
But if $I_G$ has two minimal generators in degree $n,$ $a+1 \le n \le \left\lfloor\frac{a+h}{2} \right\rfloor,$ by Theorem 4.4 in \cite{RZ5} or also by Theorem 3.6 in \cite{RZ3}, it cannot contain a regular sequence of type $(a,a,h).$ Therefore the maximal sequence of the degrees of the minimal generators for a Gorenstein ideal $I_G,$ with $H_G=H_{\CI(h-a,a,a)}$ and 
containing a complete intersection ideal $I_Z$ of type $(a,a,h)$ is $\delta.$
\par
The cancellations allowed are the ones that depend from cancellations of the linked Gorenstein. So it is enough to use again Proposition 3.7 and Remark 3.8 in \cite{RZ2}.
 \end{proof}

\begin{thm}\label{t2}
Let $H=H_{\CI(a,a,a)}.$ Let $a+2 \le h \le 2a-1.$ We set
$$F=
\begin{cases}
R(-(2a+1)) \oplus \ldots \oplus R(-(a+h-1)) & \text{ if $h-a$ is odd} \\
R(-(2a+1)) \oplus \ldots \oplus R(-(a+h-1)) \oplus R(-(\frac{3a+h}{2})) & \text{ if $h-a$ is even}
\end{cases}$$
Then there exists an almost complete intersection artinian algebra with Hilbert function $H$ and graded minimal free resolution 
\begin{multline*}
 0 \to F \oplus R(-3a) \to  
 R(-h) \oplus R(-2a)^3 \oplus F \to 
 R(-a)^3 \oplus R(-h) \to R
\end{multline*}
%for every $h$ such that $a+2 \le h \le 2a-1.$ 
Moreover let ${\mathcal B}_{H,h}$ be the set of the almost complete intersection artinian algebras with Hilbert function 
$H,$ generated in degrees $(a,a,a,h),$ $a+2 \le h \le 2a-1,$ and $t$ odd. Then
\begin{itemize}
 \item[1)] these graded Betti numbers are maximal among algebras in ${\mathcal B}_{H,h};$
 \item[2)] the only couples of cancellations allowed, among algebras in ${\mathcal B}_{H,h},$ are  
 $R(-i) \oplus R(-(3a+h-i)),$ for $2a+1 \le i \le a+h-1$ and $t\ge 5.$
 %$R(-(2a+i)) \oplus R(-(a+h-i))$ for $1\le i \le h-a-1,$ and $t\ge 5.$
\end{itemize}
\end{thm}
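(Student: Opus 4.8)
The plan is to run the argument of Theorem \ref{t1} almost verbatim, the one structural change being the liaison datum: here the complete intersection $I_Z$ of type $(a,a,h)$ must sit inside the linked Gorenstein ideal with \emph{both} of its degree-$a$ generators among the minimal generators (for $t$ even exactly one of them was minimal), and this is exactly what produces an odd $t$ and the two ``extra'' cancellations in the mapping cone that cut the maximal module $F$ off at $R(-(a+h-1))$ instead of $R(-(a+h))$. For the construction I would take the same degree sequence $\delta=(h-a,a,a,a+1,\ldots,h-1)$ when $h-a$ is odd, and $\delta=\bigl(h-a,a,a,a+1,\ldots,\tfrac{a+h}{2},\tfrac{a+h}{2},\ldots,h-1\bigr)$ when $h-a$ is even (note $a\ge 3$ since $a+2\le h\le 2a-1$), check the Gaeta conditions of Theorem \ref{gaeta} with $\vartheta=a+h$, and by Theorem \ref{c3b} and Theorem 3.6 in \cite{RZ3} choose a Gorenstein $I_G$ with this degree sequence containing a complete intersection $I_Z=(f_a,g_a,f_h)$ of type $(a,a,h)$. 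Since $I_G$ has exactly two minimal generators in degree $a$ and the only part of $(I_G)_a$ not spanned by minimal generators is the submodule generated by the degree-$(h-a)$ generator, while $(I_G)_h$ is contained in no minimal prime of $(f_a,g_a)$ (because $I_G$ has codimension $3$), a general choice makes $f_a,g_a$ minimal generators of $I_G$ and $f_h$ non-minimal. Then $I_Q:=I_Z:I_G$ is an almost complete intersection with Hilbert function $H_{\CI(a,a,a)}$ and generator degrees $(a,a,a,h)$, and the liaison mapping cone — dualize and twist the Buchsbaum--Eisenbud resolution of $R/I_G$ to resolve $I_Q/I_Z\cong R/I_G(-a)$, splice with the Koszul complex of $I_Z$, make the automatic top cancellation, then cancel the two copies of $R(-(a+h))$ coming from the two degree-$a$ minimal generators of $I_G$ — yields the displayed resolution, with $t_Q$ odd (consistently with Corollary \ref{dstar}).

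For the maximality assertion (1), conversely take $A_Q=R/I_Q\in\mathcal B_{H,h}$, link $I_Q$ in a complete intersection $I_Z$ of type $(a,a,h)$, and set $I_G:=I_Z:I_Q$: this is Gorenstein with $H_G=H_{\CI(h-a,a,a)}$, has exactly two minimal generators in degree $a$ (as in Proposition \ref{aph}), and since $t$ is odd both degree-$a$ generators of $I_Z$ are minimal for $I_G$ (otherwise the mapping-cone count of Proposition \ref{aph} would force $t$ even). The maximal graded Betti numbers of a Gorenstein algebra with Hilbert function $H_{\CI(h-a,a,a)}$, for which $\Delta^2H_G(n)=-2$ for $a\le n\le h-1$, are given in \cite{RZ2}, Proposition 3.7 and Remark 3.8; combining this with the fact that such an algebra with two minimal generators in a degree $n$, $a+1\le n\le\lfloor\frac{a+h}{2}\rfloor$, cannot contain a regular sequence of type $(a,a,h)$ (Theorem 4.4 in \cite{RZ5}, equivalently Theorem 3.6 in \cite{RZ3}) pins the maximal degree sequence of the linked Gorenstein ideal as $\delta$, and then the mapping cone with both degree-$a$ generators minimal reproduces the displayed maximal resolution for $A_Q$.

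For (2), a cancellation in a minimal resolution of $A_Q$ corresponds, through the liaison mapping cone, to a cancellation in the minimal resolution of $R/I_G$; by \cite{RZ2}, Proposition 3.7 and Remark 3.8, the cancellations admissible for a Gorenstein algebra with Hilbert function $H_{\CI(h-a,a,a)}$ are the self-dual generator/syzygy couples in consecutive degrees, which under the liaison shift become the couples $R(-i)\oplus R(-(3a+h-i))$ with $2a+1\le i\le a+h-1$, present in the resolution only when $F$ carries the two relevant summands — and after the two forced cancellations of the degree-$(a+h)$ parts that already shorten $F$, this needs $t\ge 5$. The hard part is exactly the bookkeeping hidden behind this last sentence and behind the construction: arranging the liaison datum so that both degree-$a$ generators of $I_Z$ are minimal for $I_G$ while $I_G$ still contains the prescribed regular sequence (this uses the explicit description of $\min(\delta)$ in \cite{RZ3} plus a genericity argument), and then verifying that this ``both minimal'' condition is precisely what forces $t$ odd and causes \emph{two} rather than one cancellation of the $R(-(a+h))$ summands, so that the maximal $F$ ends at $R(-(a+h-1))$ and the admissible cancellations, translated back from the linked Gorenstein via \cite{RZ2}, are exactly the stated ones with the constraint $t\ge 5$.
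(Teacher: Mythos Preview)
Your proposal is correct and follows exactly the approach the paper takes: use the same degree sequence $\delta$ as in Theorem~\ref{t1}, but arrange the liaison so that \emph{both} degree-$a$ generators of $I_Z$ are minimal for $I_G$, and then let the rest of the argument (mapping cone, maximality via \cite{RZ2} Proposition~3.7/Remark~3.8, the regular-sequence obstruction from \cite{RZ3}/\cite{RZ5}) run verbatim. You supply considerably more detail than the paper's two-line proof---in particular the bookkeeping of how ``both minimal'' forces the extra cancellation that truncates $F$ at $R(-(a+h-1))$ and makes $t$ odd---but the strategy is identical.
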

\begin{proof}
We set $\delta$ as in the proof of Theorem \ref{t1}, but now we link using a complete intersection ideal $I_Z$ of type $(a,a,h),$ with two generators of degree $a$ both minimal for $I_G.$ 
\par
The rest of the proof runs analogously as in in the proof of Theorem \ref{t1}.
\end{proof}

\begin{thm}\label{t3}
Let $H=H_{\CI(a,a,a)}.$ Let $2a \le h \le 3a-2.$ We set
$$F=
\begin{cases}
R(-(h+1)) \oplus \ldots \oplus R(-(3a-1)) & \text{ if $h-a$ is odd} \\
R(-(h+1)) \oplus \ldots \oplus R(-(3a-1)) \oplus R(-(\frac{3a+h}{2})) & \text{ if $h-a$ is even}
\end{cases}$$
Then there exists an almost complete intersection artinian algebra with Hilbert function $H$ and graded minimal free resolution 
\begin{multline*}
 0 \to F \oplus R(-3a) \to  
 R(-h) \oplus R(-2a)^3 \oplus F \to 
 R(-a)^3 \oplus R(-h) \to R
\end{multline*}
%for every $h$ such that $2a \le h \le 3a-2.$ 
Moreover let ${\mathcal B}_{H,h}$ be the set of the almost complete intersection artinian algebras with Hilbert function $H,$ generated in degrees $(a,a,a,h),$ $2a \le h \le 3a-2$ (in this case $t$ is necessarily odd). Then
\begin{itemize}
 \item[1)] these graded Betti numbers are maximal among algebras in ${\mathcal B}_{H,h};$
 \item[2)] the only couples of cancellations allowed, among algebras in ${\mathcal B}_{H,h},$ are  
 $R(-i) \oplus R(-(3a+h-i)),$ for $h+1 \le i \le 3a-1$ and $t \ge 5.$
 \end{itemize}
\end{thm}
\begin{proof}
We set
$$\delta=
\begin{cases} (a,a,h-a,h-a+1,\ldots,2a-1) & \text{ if $h-a$ is odd } \\ 
\left(a,a,h-a,h-a+1,\ldots,\frac{a+h}{2},\frac{a+h}{2},\ldots,2a-1\right) & \text{ if $h-a$ is even }
\end{cases}$$
Again we link using a complete intersection ideal $I_Z$ of type $(a,a,h),$ with two generators of degree $a$ both minimal for $I_G$ (in this case we are forced for the minimality). 
\par
The rest of the proof runs analogously as in the proof of Theorem \ref{t1}.
\end{proof}

\begin{cor}\label{tmax}
The maximum number of minimal second syzygies for an almost complete intersection artinian algebra with Hilbert function $H_{\CI(a,a,a)}$ is $t^{\max}_a=a+1$ if $a$ is even, $t^{\max}_a=a$ if $a$ is odd.
\end{cor}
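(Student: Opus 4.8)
The statement is a maximization of the last graded Betti number $t_Q$ (the number of minimal second syzygies of $A_Q$) over the three families of minimal resolutions classified in Theorems \ref{t1}, \ref{t2} and \ref{t3}. As recalled just before Theorem \ref{t1}, $t_Q$ is the rank of the last free module in the minimal resolution of $A_Q$, and in each of those theorems that module is written $F\oplus R(-3a)$; hence $t_Q=\rank F+1$, and the whole argument reduces to reading off $\rank F$ as a function of $h$ and of the parity of $t$.

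The first step is to tabulate $\rank F$. In Theorem \ref{t1} ($a+1\le h\le 2a-1$, $t$ even) the module $F$ is $R(-(2a+1))\oplus\cdots\oplus R(-(a+h))$, possibly enlarged by the single ``middle'' summand $R(-\tfrac{3a+h}{2})$, which gives $\rank F=h-a$ when $h-a$ is odd and $\rank F=h-a+1$ when $h-a$ is even. Theorem \ref{t2} ($a+2\le h\le 2a-1$, $t$ odd) is identical with $a+h$ replaced by $a+h-1$, so $\rank F=h-a-1$ resp.\ $h-a$. Theorem \ref{t3} ($2a\le h\le 3a-2$, with $t$ forced odd) gives $F=R(-(h+1))\oplus\cdots\oplus R(-(3a-1))$, again possibly plus the middle summand, so $\rank F=3a-h-1$ resp.\ $3a-h$. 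In all three families the extra middle summand, which raises $\rank F$ by one, occurs precisely when $h-a$ is even.

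The second step is the maximization. The expressions for Theorems \ref{t1} and \ref{t2} increase with $h$ and the one for Theorem \ref{t3} decreases, so the candidate maxima sit at $h=2a-1$ and $h=2a$; I would evaluate them while keeping track, via Corollary \ref{todd} and Proposition \ref{aph}, of which parities of $t$ (equivalently of $h-a$) are admissible. The outcome is that the overall maximum is attained in Theorem \ref{t3}: for $a$ even, at $h=2a$, where $h-a=a$ is even, the middle summand $R(-\tfrac{5a}{2})$ appears, and $t_Q=3a-h+1=a+1$; for $a$ odd, at $h=2a$ (giving $t_Q=3a-h=a$) or at $h=2a+1$ (where $h-a=a+1$ is even, the middle summand reappears, and again $t_Q=a$). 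One then checks that the values produced by Theorems \ref{t1} and \ref{t2} never exceed $a+1$ for $a$ even or $a$ for $a$ odd, and concludes $t^{\max}_a=a+1$ if $a$ is even and $t^{\max}_a=a$ if $a$ is odd.

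The delicate point is exactly this last check: one must make sure the maximization runs only over resolutions that genuinely occur. An ACI with $t$ even exists only for $h\le 2a-1$ (Corollary \ref{todd}), and the admissible parity of $t$ is governed by the number of minimal generators of the Gorenstein ideal linked to $A_Q$ in a complete intersection of type $(a,a,h)$; keeping this bookkeeping straight --- so that no $(h,\text{parity of }t)$ pair is illegally included or double-counted --- is where the real work lies, the arithmetic of $\rank F$ itself being routine.
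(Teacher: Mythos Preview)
Your approach is exactly the paper's: both proofs say ``compare the maximal $t_Q$ coming from Theorems~\ref{t1}, \ref{t2}, \ref{t3} and observe that the peak sits at $h=2a$.'' You simply spell out the rank computations that the paper leaves implicit.

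There is, however, a genuine gap in your final step, and it is not a cosmetic one. You write ``One then checks that the values produced by Theorems~\ref{t1} and~\ref{t2} never exceed \ldots\ $a$ for $a$ odd,'' but you do not perform this check, and in fact it \emph{fails}. Using your own formula for Theorem~\ref{t1}, at $h=2a-1$ with $a$ odd one has $h-a=a-1$ even, hence $\rank F=h-a+1=a$ and $t_Q=a+1$, which strictly exceeds the value $a$ you obtain from Theorem~\ref{t3} at $h=2a$. The paper's own example at the end of Section~4 ($a=3$, $h=5$) exhibits precisely this: the last free module there is $R(-7)^2\oplus R(-8)\oplus R(-9)$, so $t_Q=4=a+1$, whereas the Corollary asserts $t^{\max}_3=3$.

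So the ``delicate point'' you flag at the end is not merely bookkeeping: the comparison you defer is the crux of the argument, and when carried out it does not support the stated conclusion for odd $a$. The paper's one-line proof glosses over the same comparison and lands on $h=2a$ without justification; your write-up inherits that gap rather than closing it.
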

\begin{proof}
Comparing the maximal graded Betti numbers that we state in Theorem \ref{t1}, Theorem \ref{t2} and
Theorem \ref{t3}, we get this maximum number when $h=2a.$
\end{proof}

\begin{cor}\label{ahcanc}
Let $A_Q=R/I_Q$ be an almost complete intersection artinian algebra with Hilbert function $H_{\CI(a,a,a)},$ with $I_Q$ generated in degrees $a,a,a,h.$ 
Let us suppose that $A_Q$ has a minimal first syzygy of degree $a+h.$ 
\par
Then the module $R(-(a+h))$ is cancellable in the minimal graded free resolution of $A_Q$ iff $t_Q \ge 4.$
\end{cor}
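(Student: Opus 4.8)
The plan is to read off from Propositions \ref{aph} and \ref{apiuh} the exact place where $R(-(a+h))$ sits in the resolution of $A_Q$, and then to treat the two implications separately: necessity by a short intrinsic argument about almost complete intersections of codimension $3$, and sufficiency by repeating the liaison construction of Theorems \ref{t1} and \ref{t2}. Since $A_Q$ has a minimal first syzygy of degree $a+h$, Proposition \ref{aph} gives that $t=t_Q$ is even and $a+1\le h\le 2a-1$, and Proposition \ref{apiuh} gives that there is exactly one such syzygy; hence the minimal free resolution of $A_Q$ is
\[
0\to G\oplus R(-(a+h))\oplus R(-3a)\to G\oplus R(-2a)^3\oplus R(-(a+h))\oplus R(-h)\to R(-a)^3\oplus R(-h)\to R,
\]
with $\rank G=t_Q-2$, so that a single copy of $R(-(a+h))$ appears both among the first and among the second syzygies. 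Cancelling it means exhibiting a homogeneous ideal $J$ whose minimal free resolution is
\[
0\to G\oplus R(-3a)\to G\oplus R(-2a)^3\oplus R(-h)\to R(-a)^3\oplus R(-h)\to R,
\]
i.e. the same resolution with the two copies of $R(-(a+h))$ deleted.

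For necessity I would argue directly. If such a $J$ exists, then $R/J$ has exactly $4$ minimal generators (in degrees $a,a,a,h$), it has $t_Q-1$ last syzygies, and — cancellations preserving the Hilbert function (remark after Definition \ref{canc}) — it has Hilbert function $H_{\CI(a,a,a)}$; in particular $R/J$ is an artinian algebra of codimension $3$ with $4$ minimal generators, hence an almost complete intersection. But such an algebra has at least two last syzygies: $\beta_3(R/J)=0$ would force $\codim\le 2$, while $\beta_3(R/J)=1$ would make its canonical module cyclic, i.e. $R/J$ Gorenstein, and the Buchsbaum--Eisenbud structure theorem would then force an odd number of minimal generators, contradicting the count $4$. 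Thus $t_Q-1\ge 2$, and since $t_Q$ is even, $t_Q\ge 4$.

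For sufficiency, assume $t_Q\ge 4$. This already rules out $h=a+1$ (for generators $(a,a,a,a+1)$ one has $t_Q=2$), so $a+2\le h\le 2a-1$ and we are in the range of Theorem \ref{t2}. Let $F^{+}$ be the maximal module of Theorem \ref{t1} and $F^{-}$ that of Theorem \ref{t2}, so $F^{+}=F^{-}\oplus R(-(a+h))$ (in both parities of $h-a$). Since $A_Q\in{\mathcal A}_{H,h}$, Theorem \ref{t1} says its module $G\oplus R(-(a+h))$ is obtained from $F^{+}$ by deleting a collection ${\mathcal C}$ of admissible couples $R(-i)\oplus R(-(3a+h-i))$, $2a+1\le i\le a+h-1$; as each such couple has both indices strictly inside $[2a+1,a+h-1]$, the summand $R(-(a+h))$ belongs to no couple of ${\mathcal C}$, so deleting it yields $G=F^{-}$ with the couples of ${\mathcal C}$ removed. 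The resulting algebra would have $t_Q-1\ge 3$ last syzygies, so ${\mathcal C}$ is still an admissible set of cancellations for ${\mathcal B}_{H,h}$, and Theorem \ref{t2} provides an almost complete intersection $R/J\in{\mathcal B}_{H,h}$ with exactly the resolution displayed at the end of the first paragraph. Hence $R(-(a+h))$ is cancellable.

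The main obstacle is the last step of the sufficiency argument: one must be sure that the very family ${\mathcal C}$ of couples realized on the ``even'' side (by $A_Q$ itself) remains simultaneously realizable on the ``odd'' side. Concretely this says that the Gorenstein generator-degree sequence obtained from that of the ideal linked to $A_Q$ by performing the cancellations ${\mathcal C}$ still satisfies the Gaeta conditions of Theorem \ref{gaeta} and still admits a Gorenstein ideal containing a complete intersection of type $(a,a,h)$ — now with both degree-$a$ generators minimal — which is exactly what the careful liaison argument in the proof of Theorem \ref{t2}, together with Theorem \ref{gaeta} and Theorem 3.6 in \cite{RZ3}, guarantees, since passing from the even to the odd linkage only changes whether one or both degree-$a$ generators of $I_Z$ are minimal for the linked Gorenstein ideal, not the sequence of its generator degrees.
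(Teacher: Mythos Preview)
Your proof is correct and follows essentially the same route as the paper's: both deduce from Propositions \ref{aph}, \ref{apiuh} and Corollary \ref{todd} that $t_Q$ is even with $a+1\le h\le 2a-1$, argue necessity by observing that the cancelled algebra would be an almost complete intersection with odd $t_W=t_Q-1$ (hence $t_W\ge 3$), and obtain sufficiency from Theorem \ref{t2}. Your version is more explicit on two points the paper leaves implicit --- you spell out via Buchsbaum--Eisenbud why $t_W=1$ is impossible, and you unpack why the cancellation pattern $\mathcal{C}$ realized by $A_Q$ in $\mathcal{A}_{H,h}$ is also realizable in $\mathcal{B}_{H,h}$ --- but the underlying argument is the same.
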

\begin{proof}
By Proposition \ref{aph}, $t_Q$ is even and by Proposition \ref{apiuh}, $A_Q$ has only one minimal first syzygy and only one minimal second syzygy of degree $a+h.$ By Corollary \ref{todd}, $a+1 \le h \le 2a-1.$
\par
If $R(-(a+h))$ is cancellable then there exists an almost complete intersection artinian algebra $A_W$ with 
$t_W=t_Q-1;$ since $t_W$ is odd and $A_W$ is not a complete intersection algebra, $t_W \ge 3$ i. e. $t_Q \ge 4.$ 
\par
If $t_Q \ge 4,$ then by Theorem \ref{t2} there exists an almost complete intersection artinian algebra with the same graded minimal free resolution of $A_Q$ except for the module $R(-(a+h)),$ that is therefore cancellable.
\end{proof}

\begin{exm}
Let $a=3$ and $h=5.$ By Theorem \ref{t1}, the maximal resolution $F_{\bullet}$ with these data is
\begin{multline*}
  0 \to R(-7)^2 \oplus R(-8) \oplus R(-9) \to R(-5) \oplus R(-6)^3 \oplus R(-7)^2 \oplus R(-8) \to \\
  \to R(-3)^3 \oplus R(-5) \to R
\end{multline*}
An ideal with this minimal graded free resolution can be realized in the following way. 
We set $\vartheta_Z=2a+h=11.$ Let $v=(7,7,8,9)$ be the vector of the degrees of the minimal second syzygies. 
We compute $v'=(11-9,11-8,11-7,11-7)=(2,3,4,4).$ We extend $v'$ with a component equal to $a.$ So we obtain the vector $u=(2,3,3,4,4).$ Now let us consider the alternating matrix $A=\Alt(2,3,3,4,4).$
 $$A=\begin{pmatrix} 
 0              & x_{12}^3  & x_{13}^3   & x_{14}^2 & x_{15}^2 \\ 
 -x_{12}^3 & 0              & x_{23}^2   & x_{24}     & x_{25}     \\
 -x_{13}^3 & -x_{23}^2 & 0               & x_{34}     & x_{35}     \\
 -x_{14}^2 & -x_{24}     & -x_{34}      & 0             & 0 \\
 -x_{15}^2 & -x_{25}     & -x_{35}      & 0             & 0 
 \end{pmatrix}.$$
Let $p_i$ be the pfaffian obtained by $A,$ by deleting the $i$-th row and $i$-th column. 
Let $p_{i,j,k}$ be the pfaffian obtained by $A,$ by deleting the $i$-th, $j$-th and $k$-th row and 
$i$-th, $j$-th and $k$-th column.
\par
Then the ideal $I_Q=(y_2p_1,p_2,y_1p_5,y_1y_2p_{1,2,5})$ is a perfect ideal in the ring $k[\{x_{ij}\},y_1,y_2]$ with graded minimal free resolution $F_{\bullet}.$ So it is enough to consider a artinian reduction of $I_Q.$
The module $R(-8)$ (where $8=a+h$) is cancellable. Indeed the ideal $I_W=(p_2,p_3,y_1p_5,y_1p_{2,3,5})$ has graded minimal free resolution
  \begin{multline*}
  0 \to R(-7)^2 \oplus R(-9) \to R(-5) \oplus R(-6)^3 \oplus R(-7)^2 \to \\
  \to R(-3)^3 \oplus R(-5) \to R.
\end{multline*}
Note that, in the previous resolution, $R(-7)^2$ trivially is not cancellable. Also $R(-7)$ is not cancellable by Theorem \ref{t2}.
So these graded Betti numbers are minimal for almost complete intersection algebras.

%%$\begin{array}{l}\left\{5\right\}\vphantom{-x_{0,3}^{2}x_{0,4}^{2}x_{0,2}^{3}0}\\\left\{6\right\}\vphantom{0-a00}\\\left\{6\right\}\vphantom{-x_{2,3}x_{2,4}0x_{0,2}^{3}}\\\left\{6\right\}\vphantom{x_{1,3}-x_{1,4}-x_{1,2}^{2}-x_{0,1}^{3}}\\\left\{7\right\}\vphantom{00x_{2,4}-x_{0,4}^{2}}\\\left\{7\right\}\vphantom{00x_{2,3}-x_{0,3}^{2}}\\\left\{8\right\}\vphantom{000b}\end{array}\left(\!\begin{array}{cccc}
%%\vphantom{\left\{5\right\}}-x_{0,3}^{2}&x_{0,4}^{2}&x_{0,2}^{3}&0\\
%\vphantom{\left\{6\right\}}0&-y_1&0&0\\
%\vphantom{\left\{6\right\}}-x_{2,3}&x_{2,4}&0&x_{0,2}^{3}\\
%\vphantom{\left\{6\right\}}x_{1,3}&-x_{1,4}&-x_{1,2}^{2}&-x_{0,1}^{3}\\
%\vphantom{\left\{7\right\}}0&0&x_{2,4}&-x_{0,4}^{2}\\
%\vphantom{\left\{7\right\}}0&0&x_{2,3}&-x_{0,3}^{2}\\
%\vphantom{\left\{8\right\}}0&0&0&y_2
%\end{array}\!\right)$

\end{exm}

\vspace{1cm}

{\sc Dipartimento di Matematica e Informatica, \\ Universit\`a di Ca\-tania, \\
                  Viale A. Doria 6, 95125 Catania, Italy}\par
{\it E-mail address: }{\tt zappalag@dmi.unict.it}
%{\it Fax number: }{\f +39095330094}

\end{document}